\journalname{JOTA}
\newtheorem{observation}{\textbf{Observation}}
\newcommand{\R}{\mathbb{R}}
\newcommand{\otheta}{\varphi'}
\newcommand{\utheta}{\varphi}
\newcommand{\mtheta}{\bar{\varphi}_\gamma}
\begin{document}
	
	\title{Approximating Biobjective Minimization Problems Using General Ordering Cones}
	\titlerunning{Approx. Biobjective Minimization Problems Using General Ordering Cones}
	
	\author{Arne Herzel \and
		Stephan Helfrich \and 
		Stefan Ruzika			\and
		Clemens Thielen			
	}
	
	\institute{Arne Herzel (corresponding author) \and Stephan Helfrich \and Stefan Ruzika \at
		Department of Mathematics, University of Kaiserslautern,
		Paul-Ehrlich-Str.~14,\\ 67663~Kaiserslautern, Germany,
		\email{\{herzel,helfrich,ruzika\}@mathematik.uni-kl.de}
		\and
		Arne Herzel (corresponding author) \and Clemens Thielen \at 
		TUM Campus Straubing for Biotechnology and Sustainability, Weihenstephan-Triesdorf University of Applied Sciences, Am~Essigberg~3, 94315~Straubing, Germany, \email{\{arne.herzel,clemens.thielen\}@hswt.de} 
		\and 
		Clemens Thielen \at
		Department of Mathematics, Technical University of Munich, Boltzmannstr.~3,\\ 85748~Garching, Germany, \email{clemens.thielen@tum.de}
	}
	
	\date{Received: date / Accepted: date}
	
	\maketitle
	
	\begin{abstract}
		
		This article investigates the approximation quality achievable for biobjective minimization problems with respect to the Pareto cone by solutions that are (approximately) optimal with respect to larger ordering cones.
		When simultaneously considering $\alpha$-approximations for all closed convex ordering cones of a fixed inner angle~$\gamma \in [\frac \pi 2, \pi]$, an approximation guarantee between~$\alpha$ and~$2 \alpha$ is achieved, which depends continuously on~$\gamma$. The analysis is \mbox{best-possible} for any inner angle and it generalizes and unifies the known results that the set of supported solutions is a 2-approximation and that the efficient set itself is a 1-approximation. 
		
		Moreover, it is shown that, for maximization problems, no approximation guarantee is achievable by considering larger ordering cones in the described fashion, which again generalizes a known result about the set of supported solutions.
	\end{abstract}
	\keywords{Multiobjective optimization \and Approximate Pareto set \and Ordering cone \and Supported solution \and Efficient solution}
	\subclass{90C29 \and 68W25}
	
	
	\section{Introduction}
	Multiobjective optimization problems, i.e., optimization problems with more than one objective function, are of growing interest in both mathematical optimization theory and real-world applications. In these problems, solutions optimizing all objectives simultaneously usually do not exist. Therefore, if no prior information about preferences is available, every  so-called \emph{efficient solution} is a possible candidate for an optimal solution. A solution is said to be efficient if any other solution that is better in some objective is necessarily worse in at least one other objective. One of the major challenges in multiobjective optimization is the overwhelming number of different images of efficient solutions that typically exist.
	
	Additional preference information reduces the number of solutions that qualify as optimal. A common way to model such preferences is via \emph{ordering cones}, which describe, for each solution, which other solutions are guaranteed to be worse. In the case of minimization problems, the case of no prior information described above corresponds to the ordering cone being the nonnegative orthant of the objective space (also called the \emph{Pareto cone} in this context). A larger ordering cone means more preference information and, thus, a smaller set of possible optimal solutions. Prominent special cases are weighted sum scalarizations, which correspond to the ordering cones being half spaces. If the weights for a weighted sum scalarization are given, this means that the complete preference information is available.
	
	Another important approach for dealing with large numbers of required solutions is the concept of approximation, where every solution only has to be covered up to a multiplicative tolerance in each objective function, thus reducing the number of needed solutions drastically.
	
	In this article, we study relations between these two approaches. More precisely, we study approximation properties (with respect to the Pareto cone) of solutions that are \textcolor{black}{(approximately)} optimal with respect to larger ordering cones. Our main focus lies on the case of biobjective minimization problems.
	\subsection{Related Work}
	\textcolor{black}{The field of study of mathematical optimization with respect to vector-valued objective functions and general preference relations is known as vector optimization. An introduction to the concepts of vector optimization can be found in~\cite{Eichfelder+Jahn:solution-concepts, jahn2009vector}. Multiobjective optimization is as a subfield of vector optimization in which preferences are defined by the componentwise ordering on~$\R^p$.}
	
	The use of cones to model preferences is a well-studied topic in multiobjective optimization~\cite{Ehrgott:book,Hunt+etal:cones, Yu:book} and their investigation as dominance cones was initiated by Yu~\cite{Yu:cones}. He gives an in-depth study of the equivalence of properties between orderings and cones in multiobjective and vector optimization theory. Conditions under which multiobjective optimization problems using alternative ordering cones can be reduced to the standard case of the componentwise ordering are studied in~\cite{Noghin:cones}. An overview about results on properties of ordering cones in multiobjective optimization and the corresponding literature can be found in~\cite{Wiecek:cone-advances}.
	
	Vanderpooten et al.~\cite{Vanderpooten+etal:covers+approximations} introduce a general framework modeling a variety of notions of approximation in the context of general ordering cones, including the concepts considered here. They provide conditions under which an approximation with respect to some cone is an approximation with respect to some other cone containing it. 
	Engau and Wiecek~\cite{Engau+Wiecek:cone-approximation} characterize an additive notion of approximation using the theory of dominance cones.
	
	The systematic study of the theory of approximation in multiobjective optimization in the multiplicative sense considered here started with the seminal work of Papadimitriou and Yannakakis~\cite{Papadimitriou+Yannakakis:multicrit-approx}. They show that, under weak assumptions, approximations of polynomial cardinality are guaranteed to exist and that the problem of finding an approximation can be polynomially reduced to solving an approximate version of the decision problem associated with the multiobjective optimization problem. Subsequent articles focus on sufficient conditions for the computability of approximations and their cardinality~\cite{Bazgan+etal:min-pareto,Diakonikolas+Yannakakis:epsilon-convex,Diakonikolas+Yannakakis:approx-pareto-sets, Herzel+etal.:one-exact,Koltun+Papadimitriou:approx-dom-repr,Vassilvitskii+Yannakakis:trade-off-curves}. A survey on literature about approximation methods for general multiobjective optimization problems and for several specific multiobjective combinatorial optimization problems is given in~\cite{Herzel+etal:survey}.
	
	The weighted sum scalarization (see, e.g.,~\cite{Ehrgott:book}) as a special case of alternative ordering cones has been a widely studied tool for computing approximations in multiobjective optimization problems: Gla{\ss}er et al.~\cite{Glasser+etal:multi-hardness} study how multiobjective optimization problems can be approximated using a norm-based approach. Most notably, they show that, for $p$-objective minimization problems, for any $\varepsilon > 0$, a $(p+\varepsilon)$-approximation can be computed using the weighted sum scalarization. A specific algorithm using the weighted sum scalarization for computing approximations in biobjective minimization problems is given in~\cite{Halfmann+etal:general-approx}. For biobjective optimization problems with convex feasible sets and linear objective functions, an efficient algorithm for computing $(1+\varepsilon)$-approximations is studied in~\cite{Daskalakis+etal:Chord-Algorithm}. For an extensive study of the approximation quality achievable by the weighted sum scalarization for multiobjective minimization and maximization problems in general, see~\cite{Bazgan+etal.:power-weighted-sum}.
	
	\subsection{Our Contribution}
	We consider multiplicative approximation using general ordering cones for the special case of biobjective minimization problems. More specifically, we investigate how optimal (or approximate) solutions with respect to general ordering cones can be used to achieve an approximation guarantee with respect to the usual Pareto cone. In contrast to the results by Vanderpooten et al.~\cite{Vanderpooten+etal:covers+approximations} about approximation guarantees carrying over from smaller to larger ordering cones, we show that an approximation with respect to some fixed ordering cone containing the Pareto cone does not straightforwardly yield an approximation with respect to the Pareto cone (i.e., in the classical sense). We introduce the concept of \emph{$\gamma$-supportedness} as a generalization of both supportedness and efficiency. For some angle~$\gamma \in [\frac \pi 2, \pi]$, a solution is called $\gamma$-supported if it is optimal with respect to some (arbitrary) ordering cone of inner angle $\gamma$. Thus, the definitions of a $\frac \pi 2$-supported solution and a $\pi$-supported solution coincide with the definition of an efficient solution and a supported solution, respectively. We show that this characterization of ordering cones by their inner angle provides structural results on the approximation guarantee that is achievable for the Pareto cone by solutions that are approximately optimal with respect to larger cones. Our main result (Theorem~\ref{thm:cone-approx}) naturally generalizes existing approximation results for the weighted sum scalarization as well as for the Pareto cone and unifies them in a general statement about approximability by a family of cones specified by their inner angle. Moreover, we show that the achieved approximation guarantees are best possible for every inner angle $\gamma \in [\frac \pi 2, \pi]$, including the previously known cases. Finally, we show that considering families of cones of the same inner angle does not yield an approximation guarantee for maximization problems, which, again, generalizes known results for the weighted sum scalariztion to general ordering cones.
	
	\section{Preliminaries}\label{sec:preliminaries}
	In this section, we first repeat some important concepts and definitions from multiobjective optimization theory in the classical sense. Then we briefly recall how to generalize multiobjective optimization problems to more general ordering relations via cones and provide some basic properties of this generalization. 
	
	We introduce a new framework that allows us to describe biobjective minimization problems with respect to general ordering relations and to define $\gamma$-supportedness. Finally, we provide a formal definition of approximation for multiobjective optimization problems with respect to general ordering cones. 
	\subsection{Multiobjective Optimization and Scalarizations}\label{subsec:moo}
	We use the usual notation $\R^p_\geqq\coloneqq \{y \in \R^p : 0 \leqq y \}$, where $0 \in \R^p$ is the $p$-dimensional zero vector and $\leqq$ is the weak componentwise order:
	\begin{align*}
	y \leqq y' &\ratio\Leftrightarrow y_i \leq y'_i, \quad i = 1, \ldots,p
	\end{align*}
	Multiobjective optimization problems can be formally defined as follows:
	\begin{definition} [Multiobjective Minimization/Maximization Problem]
		For $p \geq 1$, a \emph{$p$-objective optimization problem}~$\Pi$ is given by a set of instances. Each instance $I=(X^I,f^I)$ consists of a (finite or infinite) set~$X^I$ of (feasible) solutions and a vector~$f^I = (f^I_1,\ldots, f^I_{p})$ of $p$ objective functions~$f^I_i : X^I \to \R$ for $i = 1,\ldots,p$. In a minimization problem, all objective functions~$f^I_i$ should be minimized, in a maximization problem, they should be maximized.
	\end{definition}
	
	The solutions of interest are those for which it is not possible to improve the value of one objective function without worsening the value of at least one other objective. Solutions with this property are called \emph{efficient solutions}:
	
	\begin{definition}
		For an instance~$I=(X^I,f^I)$ of a $p$-objective minimization (maximization) problem, a solution~$x \in X^I$ \emph{dominates} another solution~$x' \in X^I$ if $f^I(x) \neq f^I(x')$ and $f^I(x) \leqq f^I(x')$ ($f^I(x) \geqq f^I(x')$). A solution~$x \in X^I$ is called \emph{efficient} if it is not dominated by any other solution~$x' \in X^I$. The set~$X^I_E\subseteq X^I$ of all efficient solutions is called the \emph{efficient set}.
	\end{definition}
	
	In the following, we usually drop the superscript~$I$ indicating the dependence on the instance in~$X^I$, $f^I$, etc. The majority of the results of this paper are only applicable for minimization problems. Therefore, we introduce some of the concepts in this chapter for minimization problems only, even though they easily transfer to the case of maximization. Some of the formal definitions for maximization problems are given in Section~\ref{sec:maximization}.
	
	In the remainder of this paper, it is assumed that, in any instance~$I = (X,f)$ of a $p$-objective minimization problem, the set $f(X) + \R^p_\geqq$ is closed.\footnote{This property is known as $\R^p_\geqq$-closedness~\cite{Ehrgott:book}.} Note that this is, in particular, the case if $f(X)$ is \textcolor{black}{compact}, which holds, for example, if $f(X)$ is finite or a \textcolor{black}{polytope}. Additionally, it is assumed that all objective functions only attain positive values $f_i(x) > 0$ for all $x \in X$ and $i = 1,\ldots, p$. This allows for a reasonable notion of approximation (see Subsection~\ref{subsec:approx}). These assumptions imply that, for any feasible solution~$x \in X$ that is dominated by another feasible solution $x' \in X$, there also exists an efficient solution~$x'' \in X_E$ dominating~$x$.\footnote{This property is known as external stability~\cite{Ehrgott:book}.}
	
	\medskip
	
	When dealing with multiobjective optimization problems, it is common to consider \emph{scalarizations}, where related single objective optimization problems are considered in order to gain information about the multiobjective problem at hand. Here, we consider only scalarizations where the feasible set remains unchanged. We call an instance of a single objective optimization problem that shares the feasible set~$X$ with a given multiobjective optimization problem instance~$I$ (and whose solutions yield some information about the multiobjective instance) a \emph{scalarization of~$I$}.
	
	Two of the most important kinds of scalarizations are weighted sum scalarizations and weighted max-ordering scalarizations. 
	
	\begin{definition}
		For an instance~$I = (X,f)$ of a $p$-objective minimization problem and weights~$w_i > 0$ for $i = 1, \ldots, p$, the \emph{weigthed sum scalarization of~$I$ with weights~$w_1,\ldots,w_p$} is the single objective instance
		\begin{align*}
		\min_{x \in X} \quad w_1 \cdot f_1(x) + \cdots + w_p \cdot f_p(x).
		\end{align*}
	\end{definition}
	
	It is well-known that, for any multiobjective optimization problem instance~$I$ and weights~$w_i > 0$ for $i=1,\ldots,p$, any solution~$x \in X$ that is optimal for the weighted sum scalarization of~$I$ with weights~$w_1,\ldots,w_p$ is efficient (for~$I$). On the other hand, there might exist efficient solutions that are not optimal for any weighted sum scalarization. Solutions that are optimal for some weighted sum scalarization are called \emph{supported solutions}.
	
	\begin{definition}
		For an instance~$I = (X,f)$ of a $p$-objective minimization problem and weights~$w_i > 0$ for $i = 1, \ldots, p$, the \emph{weighted max-ordering scalarization of~$I$ with weights~$w_1,\ldots,w_p$} is the single objective instance
		\begin{align*}
		\min_{x \in X} \quad \max\left\{ w_1 \cdot f_1(x), \ldots, w_p \cdot f_p(x)\right\}.
		\end{align*}
	\end{definition}
	
	It is well-known that, for any multiobjective optimization problem instance~$I$ and weights~$w_i > 0$, there exists some solution~$x \in X$ that is optimal for the weighted max-ordering scalarization of~$I$ with weights~$w_1,\ldots,w_p$ and also efficient (for~$I$). Moreover (if $f(x) > 0$ for all $x \in X$ as assumed here), each efficient solution~$x \in X_E$ is optimal for the weighted max-ordering scalarization with weights~$w_i = \frac 1 {f_i(x)}$ for $i = 1,\ldots,p$.

	\subsection{Orderings and Cones}\label{subsec:cones}
	In multiobjective minimization problems, where efficient solutions are of interest, it is implicitly assumed that the underlying preference relation is the weak componentwise order $\leqq$: A solution $x \in X$ is efficient if and only if, for any $x' \in X$ with $f(x') \leqq f(x)$, we also have $f(x) \leqq f(x')$. However, this can be generalized to other reasonable ways of defining ``optimal'' solutions.
	
	\textcolor{black}{A binary relation~$R$ on a vector space that is reflexive, transitive, compatible with addition (i.e., for any~$y, y',z \in \R^p$ with $y R y'$, we have $(y+z) R (y'+z)$), and compatible with scalar multiplication (i.e., for any~$y,y' \in \R^p$ with $yRy'$ and any~$\lambda > 0$, we have $(\lambda \cdot y) R (\lambda\cdot y')$) is called a \emph{vector preorder}. It is well-known that any closed vector preorder~$R$ on $\R^p$ corresponds to exactly one closed convex cone~$C \subseteq \R^p$ via $y R y' \Leftrightarrow y' - y \in C$ and vice versa~\cite{Ehrgott:book}.}
	
	In multiobjective optimization, the relations that are of interest additionally adhere to \textcolor{black}{the so-called \emph{Pareto axiom}~\cite{Noghin:cones}:} If a solution is at least as good as another solution in all objective functions, it should also be at least as good in the multiobjective sense, and if a solution is not better than another solution in any objective function and strictly worse in at least one objective, it should be worse in the multiobjective sense. For multiobjective minimization problems, this means that a closed vector preorder~$\preceq$ only qualifies as a meaningful way to describe multiobjective preferences if we have~$\R^p_\geqq \subseteq C_\preceq$ and $- \R^p_\geqq \cap C_\preceq = \{0\}$. 
	
	In the two-dimensional case, the situation is particularly simple: Any closed convex cone $C \subseteq \R^2$ (except for the empty set and subspaces of $\R^2$) can be uniquely described by its inner angle $\gamma \in [0, \pi]$ and its rotation $\utheta \in [0, 2\pi)$ with respect to some direction of reference. For cones containing $\R^2_\geqq$, the inner angle~$\gamma$ has to be within $[\frac \pi 2, \pi]$ and the angle of rotation~$\utheta$ can vary within an interval of length~$\gamma - \frac \pi 2$ (without loss of generality, the interval $[0, \gamma - \frac \pi 2]$ since we can choose the direction of reference accordingly). Note that, if the inner angle of a cone containing~$\R^2_\geqq$ is smaller than $\pi$, it does not contain any point from~$-\R^2_\geqq \setminus \{0\}$. There exist exactly two cones of inner angle $\pi$ that contain~$\R^2_\geqq$ and are not disjoint from~$-\R^2_\geqq \setminus \{0\}$, namely $\{(y_1,y_2) \in \R^2 | y_1 \geq 0\}$ and $\{(y_1,y_2) \in \R^2 | y_2 \geq 0\}$. Thus, in a closed convex cone~$C \subseteq \R^2$, if the inner angle~$\gamma$ is smaller than $\pi$, we have~$\R^p_\geqq \subseteq C$ and $- \R^p_\geqq \cap C = \{0\}$ if and only if the angle of rotation $\utheta$ lies in the closed interval~$[0, \gamma - \frac \pi 2]$. If the inner angle~$\gamma$ is equal to $\pi$, we have~$\R^p_\geqq \subseteq C$ and $- \R^p_\geqq \cap C = \{0\}$ if and only if $\utheta$ lies in the open interval~$(0, \frac \pi 2)$. Given $\gamma \in [\frac \pi 2, \pi]$, we can write the allowed interval for $\utheta$ shortly as $[0,\gamma -\frac \pi 2] \setminus \{\gamma - \pi, \frac\pi 2\}$. This yields the closed interval~$[0, \gamma- \frac \pi 2]$ for $\gamma < \pi$ and the open interval~$(0, \gamma - \frac \pi 2) = (0, \frac \pi 2)$ for $\gamma = \pi$.
	
	Hence, the following definition covers exactly all closed convex cones~$C \subseteq \R^2$ for which~$\R^p_\geqq \subseteq C$ and $- \R^p_\geqq \cap C = \{0\}$.
	
	\begin{definition}
		\textcolor{black}{For $\gamma \in [\frac \pi 2, \pi]$ and $\utheta \in [0, \gamma- \frac \pi 2]\setminus \{\gamma - \pi, \frac \pi 2\}$, we define
			\begin{align*}
			\otheta \coloneqq \gamma - \frac \pi 2 - \utheta.
			\end{align*}
			In the following, if the values of~$\gamma$ and~$\utheta$ are clear from the context, we always use this convention. We define a linear mapping $T_\gamma^\utheta : \R^2 \to \R^2$ via
			\begin{align*}
			T_\gamma^\utheta(y) \coloneqq \left(
			\begin{array}{cc}
			\sin \gamma & (-\cos \gamma)\\
			0&1
			\end{array}
			\right)
			\left(
			\begin{array}{cc}
			\cos \utheta & (-\sin \utheta)\\
			\sin \utheta&\cos \utheta
			\end{array}
			\right) \cdot y = 
			\left(
			\begin{array}{cc}
			\cos \otheta & \sin \otheta\\
			\sin \utheta&\cos \utheta
			\end{array}
			\right) \cdot y.
			\end{align*}
			Using this notation, we define a cone
			\begin{align*}
			C_\gamma^\utheta \coloneqq \{y \in \R^2 : T_\gamma^\utheta(y) \geqq 0 \}
			\end{align*}
			and the corresponding vector preorder~$\leqq_\gamma^\utheta$ on $\R^p$ by
			\begin{equation*}
			y \leqq_\gamma^\utheta y' \quad \ratio \Longleftrightarrow \quad  y' - y \in C_\gamma^\utheta.
			\end{equation*} 	 
		}
		For~$\gamma \in [\frac \pi 2, \pi]$, we define $\mtheta$ to be the value of $\utheta \in [0, \gamma- \frac \pi 2]\setminus \{\gamma - \pi, \frac \pi 2\}$ for which $\otheta = \utheta$:
		\begin{align*}
		\mtheta \coloneqq \frac \gamma 2 - \frac \pi 4
		\end{align*}
	\end{definition}
	
	\textcolor{black}{It is easy to see that~$C_\gamma^\utheta \subsetneq \R^2$ is a closed convex cone with inner angle~$\gamma$ containing~$\R^2_\geqq$, and that the extreme directions of $C_{\gamma}^{\utheta}$ include angles of~$\utheta$ and~$\gamma - \frac \pi 2 - \utheta$ with the first axis and second axis, respectively (see Figure~\ref{fig:coneintroduction}): The first $2 \times 2$-matrix in the definition of~$T_\gamma^\utheta$ rotates the first axis by an angle of~$\gamma$ while the second axis remains unchanged. The second $2 \times 2$-matrix is a rotation matrix with rotation angle $\utheta$.}
	
	\begin{figure}[ht!]
		\begin{center}
			\begin{tikzpicture}[scale=1]
			\draw[->] (1,2) -- (5.5,2) node[below right] {$y_1$};
			\draw[->] (2,1) -- (2,5.5) node[above left] {$y_2$};

			\draw[fill,gray!30] (5,1) -- (2,2) -- (1,5)--(1.5,5.4) -- (2,5.2) -- (5.2,2) -- (5.4,1.5) -- (5,1);
			\draw[fill,gray!30] (2,2) -- (2,5.2) --(3,5.4) -- (3.5,5)-- (4,5.3) -- (4.5,5.1) -- (5.1,5.4) -- (5.3,5) -- (5.1,4.5) -- (5.3,4) -- (5,3)--(5.4,2.5) --  (5.2,2) -- (2,2);

			\draw[-] (2,2) -- (5.2,2);
			\draw[-] (2,2) -- (2,5.2);
			
			\draw[-] (2,2) -- (5,1);
			\draw[-] (2,2) -- (1,5);

			\node[] at (4,4)  {{ \large $C_\gamma^\utheta$}};
			
			
			\draw (3,2) arc (0:-18:1);
			\draw (2,3) arc (90:108:1);
			\draw (2.7,2) arc (0:-18:0.7);
			\draw (2.7,2) arc (0:108:0.7);
			
			\node[] at (3.16,1.81)  {$\utheta$};
			\node[] at (2.6,2.6)  {$\gamma$};
			\node[] at (2.7,3.2)  {$\otheta = \gamma - \frac \pi 2 - \utheta$};
			\end{tikzpicture}
			\caption{Illustration of the cone~$C_\gamma^\utheta \subsetneq \R^2$.\label{fig:coneintroduction}}
		\end{center}
	\end{figure}
	\noindent \textcolor{black}{Moreover, the following lemma holds for $\leqq_\gamma^\utheta$:}
	{\color{black}
		\begin{lemma}\label{lem:order-transformation}
			For $y,y' \in \R^2$, we have $y \leqq _\gamma^\utheta y'$ if and only if $T_\gamma^\utheta(y) \leqq T_\gamma^\utheta(y')$.
		\end{lemma}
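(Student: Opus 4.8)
The plan is to prove the equivalence by a direct chain of implications, exploiting the fact that $\leqq_\gamma^\utheta$ is defined in terms of membership in $C_\gamma^\utheta$, which in turn is defined via the condition $T_\gamma^\utheta(\,\cdot\,) \geqq 0$. First I would unfold the definition of $\leqq_\gamma^\utheta$: by definition, $y \leqq_\gamma^\utheta y'$ means $y' - y \in C_\gamma^\utheta$, which by the definition of $C_\gamma^\utheta$ means precisely $T_\gamma^\utheta(y' - y) \geqq 0$. The entire proof then hinges on moving the componentwise inequality $T_\gamma^\utheta(y' - y) \geqq 0$ to the equivalent statement $T_\gamma^\utheta(y) \leqq T_\gamma^\utheta(y')$.

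The key step is the \emph{linearity} of the map $T_\gamma^\utheta$, which is explicitly a matrix multiplication and hence satisfies $T_\gamma^\utheta(y' - y) = T_\gamma^\utheta(y') - T_\gamma^\utheta(y)$. Substituting this identity, the condition $T_\gamma^\utheta(y' - y) \geqq 0$ becomes $T_\gamma^\utheta(y') - T_\gamma^\utheta(y) \geqq 0$, and by the definition of the weak componentwise order $\leqq$ this is exactly $T_\gamma^\utheta(y) \leqq T_\gamma^\utheta(y')$. Since every step in this chain is an equivalence (an ``if and only if''), reading the chain backwards establishes the reverse implication automatically, and no separate argument is needed for the two directions.

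I do not expect any serious obstacle here: the statement is essentially a reformulation of the definitions combined with the linearity of $T_\gamma^\utheta$, and no properties of the specific entries of the defining matrices (the values of $\sin\gamma$, $\cos\utheta$, etc.) are required. The only point that warrants a sentence of care is the observation that the weak componentwise order $\leqq$ is, by its own definition, translation-invariant in the sense that $z \geqq 0 \Leftrightarrow a \leqq a + z$; one simply applies this with $a = T_\gamma^\utheta(y)$ and $z = T_\gamma^\utheta(y') - T_\gamma^\utheta(y)$. Thus the whole proof reduces to a short, clean display tracking the equivalences from $y \leqq_\gamma^\utheta y'$ through membership in $C_\gamma^\utheta$ to $T_\gamma^\utheta(y) \leqq T_\gamma^\utheta(y')$.
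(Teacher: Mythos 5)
Your proposal is correct and follows exactly the same route as the paper's proof: unfold $y \leqq_\gamma^\utheta y'$ to $y'-y \in C_\gamma^\utheta$, i.e.\ $T_\gamma^\utheta(y'-y) \geqq 0$, then use linearity of $T_\gamma^\utheta$ to rewrite this as $T_\gamma^\utheta(y) \leqq T_\gamma^\utheta(y')$, with every step an equivalence so both directions follow at once. No gaps; this matches the paper's one-line chain of equivalences.
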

		\begin{proof}
			We have
			\begin{align*}
			y \leqq_\gamma^\utheta y' \quad\Leftrightarrow\quad y' - y \in C_\gamma^\utheta \quad\Leftrightarrow\quad T_\gamma^\utheta(y' - y) \geqq 0 \quad\Leftrightarrow\quad T_\gamma^\utheta(y') \geqq T_\gamma^\utheta(y)
			\end{align*}
			by the definitions of~$\leqq_\gamma^\utheta$ and $C_\gamma^\utheta$ and by linearity of $T_\gamma^\utheta$.
			\qed
		\end{proof}
	}
	
	\noindent We summarize the facts obtained in this subsection so far in the following proposition:
	
	\begin{proposition} \label{prop:all-orders-covered}
		Let $C \subseteq \R^2$. The following statements are equivalent:
		\begin{enumerate}
			\item $C = C_\gamma^\utheta$ for some $\gamma \in [\frac \pi 2, \pi]$ and $\utheta \in  [0, \gamma- \frac \pi 2]\setminus \{\gamma - \pi, \frac \pi 2\}$.
			\item $C$ is a closed convex cone with $\R^2_\geqq \subseteq C$ and $- \R^p_\geqq \cap C = \{0\}$.
			\item $C = C_\preceq$ for a closed vector preorder~$\preceq$ on $\R^2$ for which $y \leqq y'$ implies $y \preceq y'$, and $y \leqq y'$ and $y \neq y'$ imply $y' \npreceq y$ for all $y,y' \in \R^2$.
		\end{enumerate}
	\end{proposition}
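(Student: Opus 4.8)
The plan is to establish the two equivalences (1)~$\Leftrightarrow$~(2) and (2)~$\Leftrightarrow$~(3) separately, since the proposition is essentially a repackaging of facts already derived in this subsection. I would treat (2) as the geometric ``hub'': the equivalence with (1) is the identification of the parametrized cones $C_\gamma^\utheta$ with the admissible ordering cones, while the equivalence with (3) is the translation between cones and their induced vector preorders.

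For (1)~$\Rightarrow$~(2), I would simply invoke the remark following the definition of $C_\gamma^\utheta$: for every admissible pair $(\gamma,\utheta)$, the set $C_\gamma^\utheta \subsetneq \R^2$ is a closed convex cone of inner angle $\gamma \in [\frac\pi 2,\pi]$ containing $\R^2_\geqq$. It then only remains to check $-\R^2_\geqq \cap C_\gamma^\utheta = \{0\}$. For $\gamma < \pi$ this is automatic, because a cone of inner angle strictly less than $\pi$ containing $\R^2_\geqq$ contains no point of $-\R^2_\geqq \setminus \{0\}$; for $\gamma = \pi$ the admissibility constraint $\utheta \in (0,\frac\pi 2)$ excludes exactly the two half-spaces $\{y_1 \geq 0\}$ and $\{y_2 \geq 0\}$, which are the only inner-angle-$\pi$ cones containing $\R^2_\geqq$ that meet $-\R^2_\geqq \setminus \{0\}$. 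Both facts were spelled out in the discussion of the two-dimensional case above. For the converse (2)~$\Rightarrow$~(1), I would first note that a set $C$ satisfying (2) is neither empty (it contains $\R^2_\geqq$) nor a linear subspace (a subspace containing $\R^2_\geqq$ would be all of $\R^2$ and would violate $-\R^2_\geqq \cap C = \{0\}$), so $C$ is uniquely described by an inner angle and a rotation. The containment $\R^2_\geqq \subseteq C$ forces the inner angle into $[\frac\pi 2,\pi]$, and the two conditions together were shown above to hold precisely when the rotation $\utheta$ lies in $[0,\gamma-\frac\pi 2]\setminus\{\gamma-\pi,\frac\pi 2\}$; matching parameters gives $C = C_\gamma^\utheta$.

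The equivalence (2)~$\Leftrightarrow$~(3) rests on the correspondence, recalled at the start of this subsection, between closed vector preorders $\preceq$ on $\R^2$ and closed convex cones $C_\preceq$ via $y \preceq y' \Leftrightarrow y'-y \in C_\preceq$. Under this bijection I would translate the two order conditions of (3) directly into the cone conditions of (2). Writing $d = y'-y$, the implication ``$y \leqq y'$ implies $y \preceq y'$'' states exactly that every $d \in \R^2_\geqq$ lies in $C_\preceq$, i.e.\ $\R^2_\geqq \subseteq C_\preceq$; and the implication ``$y \leqq y'$ and $y \neq y'$ imply $y' \npreceq y$'' states that no $d \in \R^2_\geqq \setminus \{0\}$ satisfies $-d \in C_\preceq$, i.e.\ $-\R^2_\geqq \cap C_\preceq = \{0\}$ (using $0 \in C_\preceq$). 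Reading the same translation in reverse recovers (3) from (2).

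I do not expect a serious obstacle here, as the proof is organizational rather than technical. The one point demanding care is the bookkeeping of the boundary cases --- the excluded rotations $\gamma - \pi$ and $\frac\pi 2$, and the split between the closed interval $[0,\gamma-\frac\pi 2]$ for $\gamma < \pi$ and the open interval $(0,\frac\pi 2)$ for $\gamma = \pi$ --- but since this analysis was already carried out in full in the preceding paragraphs, in the proof it only needs to be cited rather than redone.
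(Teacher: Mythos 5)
Your proposal is correct and takes essentially the same route as the paper: the paper gives no separate proof, explicitly presenting the proposition as a summary of ``the facts obtained in this subsection so far,'' and your argument assembles precisely those facts --- the inner-angle/rotation parametrization with its boundary cases for (1)~$\Leftrightarrow$~(2), and the preorder--cone correspondence with the Pareto-axiom conditions translated via $d = y'-y$ for (2)~$\Leftrightarrow$~(3). Your bookkeeping of the excluded rotations $\gamma-\pi$ and $\frac{\pi}{2}$ matches the paper's case analysis exactly, so there is nothing to add.
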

	
	Given some $\gamma \in [\frac \pi 2, \pi]$, $\utheta \in  [0, \gamma- \frac \pi 2]\setminus \{\gamma - \pi, \frac \pi 2\}$, and an instance $(X,f)$ of a biobjective minimization problem, we can define a biobjective minimization problem instance with the same feasible set~$X$ and objective function~$f$, but using~$\leqq_\gamma^\utheta$ instead of~$\leqq$ as the underlying vector preorder. Proposition~\ref{prop:all-orders-covered} states that  any reasonable way to define minimization of~$f$ over~$X$ can be described like this. Moreover, from \textcolor{black}{Lemma~\ref{lem:order-transformation}}, we know that, for any biobjective minimization problem instance~$(X,f)$, using~$\leqq_\gamma^\utheta$ is equivalent to using the weak componentwise order~$\leqq$ for the objective function~$T_\gamma^\utheta \circ f: X \rightarrow \R^2$.
	\begin{definition}\label{def:instance}
		For $\gamma \in [\frac \pi 2, \pi]$, $\utheta \in  [0, \gamma- \frac \pi 2]\setminus \{\gamma - \pi, \frac \pi 2\}$, and an instance~$I = (X,f)$ of a biobjective minimization problem~$\Pi$,  we define~$I_\gamma^\utheta \coloneqq (X, T_\gamma^\utheta \circ f)$:
		\begin{align*}
		\min_{x\in X} \quad T_\gamma^\utheta \left(f(x)
		\right)
		\end{align*}
		In a biobjective minimization problem instance~$I = (X,f)$, we say that a solution~$x \in X$ is \emph{optimal with respect to $\leqq_\gamma^\utheta$} if $x$ is efficient in~$I_\gamma^\utheta$, i.e., if there does not exist a solution $x' \in X$ such that $f(x') \neq f(x)$ and $f(x') \leqq_\gamma^\utheta f(x)$.\footnote{Note that, for any $\gamma \in [\frac \pi 2, \pi]$ and $\utheta \in  [0, \gamma- \frac \pi 2]\setminus \{\gamma - \pi, \frac \pi 2\}$, if $f(x) > 0$, then also $T_\gamma^\utheta(f(x)) > 0$. Thus, $I_\gamma^\utheta$ indeed always satisfies our assumption of positive-valued objective functions. Moreover, this implies that our assumption of $f(X) + \R^2_\geqq$ being closed also transfers to $I_\gamma^\utheta$.}
	\end{definition}
	
	The above reasoning implies that solving a biobjective minimization problem instance with respect to any reasonable closed vector preorder can be reduced to applying a linear mapping and solving the resulting instance with respect to the usual componentwise order. Thus, for any $\gamma \in [\frac \pi 2, \pi]$ and $\utheta \in  [0, \gamma- \frac \pi 2]\setminus \{\gamma - \pi, \frac \pi 2\}$, any known result that holds for biobjective optimization problems in the usual sense can also be applied to $I_\gamma^\utheta$ as long as all of the corresponding conditions are satisfied.
	However, one has to be careful when applying algorithmic results to $I_\gamma^\utheta$ since basic requirements like, e.g., polynomial computability of the objective function, do not trivially hold for $(T_\gamma^\utheta \circ f)$ even if $f$ is polynomially computable as the matrix describing~$T_\gamma^\utheta$ might contain irrational entries. 
	
	Obviously, $T_{\frac \pi 2}^0$ is the identity mapping, so, for any instance~$I$ of a biobjective minimization problem, we have $I_{\frac \pi 2}^0 = I$. Thus, in the special case~$\gamma = \frac \pi 2$ and (thus)~$\utheta = \otheta = 0$, the optimal solutions with respect to~$\leqq_\gamma^\utheta$ are exactly the efficient solutions. In the other extreme case, where $\gamma = \pi$ and $\utheta \in [0, \gamma- \frac \pi 2]\setminus \{\gamma - \pi, \frac \pi 2\} = (0, \frac \pi 2)$, the definition of $I_\gamma^\utheta$ yields the single objective optimization problem instance
	\begin{align*}
	\min_{x\in X} \quad \sin \utheta \cdot f_1(x) + \cos \utheta \cdot f_2(x),
	\end{align*}
	i.e., the weighted-sum scalarization of $I$ with (positive) weights~$\sin \utheta$ and~$\cos \utheta$.
	
	Recall that, in a multiobjective optimization problem, a solution~$x \in X$ is called supported if there exists a nonnegative vector of weights such that $x$~is an optimal solution of the weighted sum scalarization with these weights. Equivalently, using the fact that weighted sum scalarizations correspond to the case of the inner angle $\gamma$ being equal to $\pi$, we can say that a solution is supported if and only if it is an optimal solution of~$I_\gamma^\utheta$ for~$\gamma = \pi$ for some $\utheta \in (0,\frac \pi 2)$. We generalize this idea to arbitrary values of $\gamma \in [\frac \pi 2, \pi]$ in the following way:
	\begin{definition}\label{def:gamma-supp}
		Let $I = (X,f)$ be a biobjective optimization problem and let $\gamma \in [\frac \pi 2, \pi]$ be given. We say that a solution $x \in X$~is \emph{$\gamma$-supported} if there exists some~$\utheta \in [0, \gamma - \frac \pi 2]\setminus \{\gamma - \pi, \frac \pi 2\}$ such that $x$~is optimal with respect to $\leqq_\gamma^\utheta$.
	\end{definition}
	Hence, the definition of a supported solution coincides with the definition of a $\pi$-supported solution. Moreover, the definition of an efficient solution is exactly the definition of a $\frac \pi 2$-supported solution. Thus, the concept of $\gamma$-supportedness generalizes and connects the concepts of efficiency and supportedness. \textcolor{black}{Note that, if~$\gamma_1,\gamma_2 \in [\frac \pi 2 , \pi]$ such that $\gamma_1 \leq \gamma_2$, then every $\gamma_2$-supported solution is $\gamma_1$-supported. In particular, for any $\gamma \in [\frac \pi 2, \pi]$, every supported solution is $\gamma$-supported and every $\gamma$-supported solution is efficient. }
	\subsection{Approximation}\label{subsec:approx}
	Next, we define approximation for biobjective minimization problems (the definition for maximization problems is analogous). Here, we generalize the usual notion of approximation, which is based on the componentwise order, to arbitrary ordering relations on~$\R^2$. The usual definition of approximation (see~\cite{Herzel+etal:survey}) is obtained by replacing $\leqq_\gamma^\utheta$ by $\leqq$ in the following definition.\footnote{In the case of $\leqq$, an approximation is often called an ``approximate Pareto set''. Here, we only use the term ``approximation'', which also fits the more general case.}
	
	\begin{definition}\label{def:approximation}
		Let~$I = (X,f)$ be a biobjective minimization problem instance such that $f_1(x),f_2(x) > 0$ for all $x \in X$. Let $\gamma \in [\frac \pi 2, \pi]$ and $\utheta \in  [0, \gamma- \frac \pi 2]\setminus \{\gamma - \pi, \frac \pi 2\}$ be given. For a scalar~$\alpha \geq 1$, we say that \emph{$x' \in X$ is $\alpha$-approximated by~$x \in X$ with respect to~$\leqq_\gamma^\utheta$} if $f(x) \leqq_\gamma^\utheta \alpha \cdot f(x')$. A set $X_\alpha \subseteq X$ is called an \emph{$\alpha$-approximation with respect to $\leqq_\gamma^\utheta$} if any feasible solution~$x \in X$ is $\alpha$-approximated with respect to $\leqq_\gamma^\utheta$ by some solution $x' \in X_\alpha$.
		
		In single objective minimization problem instances, we say that a solution is \emph{$\alpha$-approximate} if it $\alpha$-approximates any other feasible solution (in the single objective sense, where $x'$ is $\alpha$-approximated by~$x$ if $f(x) \leq \alpha \cdot f(x)$).
	\end{definition}
	
	Obviously, for any biobjective minimization problem instance, the efficient set is a $1$-approximation.
	
	Definition~\ref{def:approximation}, together with \textcolor{black}{Lemma~\ref{lem:order-transformation}}, states that a solution $x' \in X$ is $\alpha$-approximated by another solution~$x \in X$ with respect to $\leqq_\gamma^\utheta$ if $T_\gamma^\utheta(f(x)) \leqq T_\gamma^\utheta(\alpha \cdot f({\color{black}x'}))$.
	Note that, by linearity of $T_\gamma^\utheta$, this is equivalent to $T_\gamma^\utheta(f(x)) \leqq \alpha \cdot T_\gamma^\utheta(f({\color{black}x'}))$. Thus, $x' \in X$ is $\alpha$-approximated by~$x \in X$ with respect to $\leqq_\gamma^\utheta$ in $I$ if and only if $x'$ is $\alpha$-approximated by $x$ (with respect to $\leqq$) in $I_\gamma^\utheta$. Recall that, in the biobjective case, optimization with respect to any closed vector preorder can be reduced to the componentwise order via $T_\gamma^\utheta$. The above reasoning states that the concept of approximation is consistent with this reduction. In fact, this equivalent characterization of approximation would be a different straightforward way to define approximation with respect to~$\leqq_\gamma^\utheta$. However, the definition as stated in Definition~\ref{def:approximation} directly generalizes to arbitrary cones for more than two objectives while the alternative characterization is universally applicable only in the biobjective case. A very general definition of approximation in multiobjective optimization with respect to arbitrary cones and a further characterization of when the two mentioned definition approaches are equivalent are given by Vanderpooten et al.~\cite{Vanderpooten+etal:covers+approximations}. They also present various results generalizing the following observation about approximations:
	
	\begin{observation}\label{obs:subset-approx}
		Consider $\gamma_1,\gamma_2 \in [\frac \pi 2, \pi]$, $\utheta_1 \in [0, \gamma_1]\setminus \{\gamma_1 - \pi, \frac \pi 2\}$, and $\utheta_2 \in [0, \gamma_2]\setminus \{\gamma_2 - \pi, \frac \pi 2\}$ such that $\utheta_1 \leq \utheta_2$ and $\otheta_1 \leq \otheta_2$, i.e., such that
		\begin{align*}
		C_{\gamma_1}^{\utheta_1} \subseteq  C_{\gamma_2}^{\utheta_2}.
		\end{align*}
		For $\alpha \geq 1$, if~$x' \in X$ is $\alpha$-approximated by~$x \in X$ in~$I_{\gamma_1}^{\utheta_1}$, then $x'$ also $\alpha$-approximated by $x$ in~$I_{\gamma_2}^{\utheta_2}$. Thus, any $\alpha$-approximation in~$I_{\gamma_1}^{\utheta_1}$ is an $\alpha$-approximation in~$I_{\gamma_2}^{\utheta_2}$. In particular, for any $\gamma \in [\frac \pi 2, \pi]$ and $\utheta \in [0, \gamma- \frac \pi 2]$, if~$x' \in X$ is $\alpha$-approximated by~$x \in X$ in~$I$ then $x'$ is also $\alpha$-approximated by $x$ in~$I_{\gamma}^{\utheta}$ and any $\alpha$-approximation in $I$ is an $\alpha$-approximation in~$I_\gamma^\utheta$. 
	\end{observation}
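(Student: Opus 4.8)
The plan is to reduce the entire statement to a single membership implication between the two ordering cones and then lift it to sets by quantification. First I would unwind what it means for $x' \in X$ to be $\alpha$-approximated by $x \in X$ in $I_{\gamma_1}^{\utheta_1}$. By the characterization established immediately before the observation (Definition~\ref{def:approximation} combined with Lemma~\ref{lem:order-transformation}), this is equivalent to $f(x) \leqq_{\gamma_1}^{\utheta_1} \alpha \cdot f(x')$, which, by the definition of $\leqq_{\gamma_1}^{\utheta_1}$, is in turn equivalent to the membership
\begin{align*}
\alpha \cdot f(x') - f(x) \in C_{\gamma_1}^{\utheta_1}.
\end{align*}

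Second, I would invoke the hypothesis $C_{\gamma_1}^{\utheta_1} \subseteq C_{\gamma_2}^{\utheta_2}$. Since the vector above lies in $C_{\gamma_1}^{\utheta_1}$, it also lies in $C_{\gamma_2}^{\utheta_2}$, i.e., $f(x) \leqq_{\gamma_2}^{\utheta_2} \alpha \cdot f(x')$, which is precisely the statement that $x'$ is $\alpha$-approximated by $x$ in $I_{\gamma_2}^{\utheta_2}$. This is the conceptual heart of the argument: approximation with respect to a preorder is nothing but the membership of a single difference vector in the associated cone, and membership is monotone under cone inclusion. I would only remark in passing why the angle conditions $\utheta_1 \leq \utheta_2$ and $\otheta_1 \leq \otheta_2$ yield the inclusion: writing each $C_\gamma^\utheta$ as the angular sector $[-\utheta,\, \frac \pi 2 + \otheta]$ (of total opening $\utheta + \frac \pi 2 + \otheta = \gamma$), these two inequalities say exactly that the first sector is contained in the second, so the inclusion may be taken as granted.

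Third, the set-valued claim follows by quantification. If $X_\alpha \subseteq X$ is an $\alpha$-approximation in $I_{\gamma_1}^{\utheta_1}$, then every $\tilde x \in X$ is $\alpha$-approximated by some $x' \in X_\alpha$ in $I_{\gamma_1}^{\utheta_1}$; applying the pointwise implication (with the approximating solution $x'$ and the approximated solution $\tilde x$ named as in the definition of an approximation) shows that $\tilde x$ is $\alpha$-approximated by the same $x'$ in $I_{\gamma_2}^{\utheta_2}$, so $X_\alpha$ is an $\alpha$-approximation in $I_{\gamma_2}^{\utheta_2}$.

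Finally, for the ``in particular'' part I would specialize to $\gamma_1 = \frac \pi 2$ and $\utheta_1 = 0$, for which $T_{\frac \pi 2}^{0}$ is the identity, so that $I_{\frac \pi 2}^{0} = I$ and $C_{\frac \pi 2}^{0} = \R^2_\geqq$. Because every $C_\gamma^\utheta$ contains $\R^2_\geqq$ by Proposition~\ref{prop:all-orders-covered} --- equivalently, $\utheta_1 = 0 \leq \utheta$ and $\otheta_1 = 0 \leq \otheta$ --- the inclusion $C_{\frac \pi 2}^{0} \subseteq C_\gamma^\utheta$ holds, and the general statement specializes to the claimed one. I do not anticipate a genuine obstacle: the only things to watch are the bookkeeping of which solution plays the approximating versus the approximated role, and the faithful translation between ``being $\alpha$-approximated in $I_\gamma^\utheta$'' and the cone-membership relation $\alpha \cdot f(x') - f(x) \in C_\gamma^\utheta$.
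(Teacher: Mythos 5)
Your proof is correct and is exactly the argument the paper intends: the statement is labeled an Observation and given without proof, precisely because approximation with respect to $\leqq_\gamma^\utheta$ reduces (via Definition~\ref{def:approximation} and Lemma~\ref{lem:order-transformation}, as you note) to the single membership $\alpha \cdot f(x') - f(x) \in C_\gamma^\utheta$, which is monotone under the cone inclusion $C_{\gamma_1}^{\utheta_1} \subseteq C_{\gamma_2}^{\utheta_2}$. Your specialization of the ``in particular'' part to $\gamma_1 = \frac \pi 2$, $\utheta_1 = 0$ with $T_{\frac \pi 2}^{0}$ the identity likewise matches the paper's own setup, so there is nothing to add.
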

	
	\section{Structural Results}\label{sec:minimization}
	Observation~\ref{obs:subset-approx} states that, for any $\alpha \geq 1$, $\gamma \in [\frac \pi 2, \pi]$, and $\utheta \in [0, \gamma- \frac \pi 2]\setminus \{\gamma - \pi, \frac \pi 2\}$, any $\alpha$-approximation for~$I$ is also an $\alpha$-approximation for~$I_\gamma^\utheta$. 
	Vice versa, suppose that we can identify an approximation (or even the efficient set) for $I_\gamma^\utheta$ for some $\gamma \in (\frac \pi 2,\pi]$ and $\utheta \in [0, \gamma - \frac \pi 2]\setminus \{\gamma - \pi, \frac \pi 2\}$. Does this yield an $\alpha$-approximation for~$I$ for some~$\alpha$? It is easy to see that the answer to this question is ``no'' in general:
	\begin{example}\label{ex:onecone}
		Let $\alpha > 1$, $\gamma \in (\frac \pi 2, \pi]$, and $\utheta \in (0, \gamma - \frac \pi 2] \setminus \{\frac \pi 2\}$. Consider the following instance~$I$ of a biobjective minimization problem (see also Figure~\ref{fig:oneconeexample}):
		Let the feasible set consist of exactly two solutions~$x_1,x_2$ such that $f_1(x_1) = 1$, $f_2(x_1) = (\alpha-1) \cdot \tan \utheta$, $f_1(x_2) = \alpha$, and $f_2(x_2) = \frac{\alpha-1}{\alpha+1} \cdot \tan \utheta$.
		Then the efficient set of $I_\gamma^\utheta$ is~$\{x_1\}$, but $\{x_1\}$ is not an $\alpha$-approximation for $I$. However, $\{x_2\}$ is an $\alpha$-approximation for~$I$. 
	\end{example}
	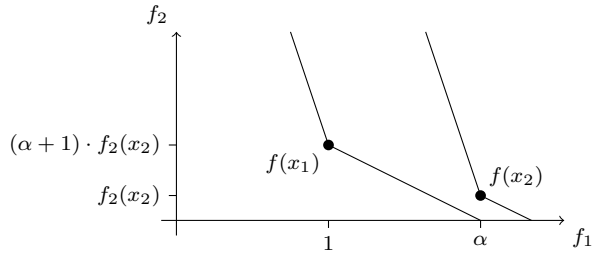
\begin{figure}[ht!]
		\begin{center}
			\begin{tikzpicture}[scale=1]
			
			
			
			
			\draw[->] (-0.2,0) -- (5.1,0) node[below right] {$f_1$};
			\draw[->] (0,-0.2) -- (0,2.5) node[above left] {$f_2$};

			\draw[-] (2,1) -- (4,0);
			\draw[-] (2,1) -- (1.5,2.5);
			
			\draw[-] (4,0.33) -- (4.67,0);
			\draw[-] (4,0.33) -- (4-13/18,2.5);

			\fill (2,1) node[below left]{$f(x_1)$} circle (2pt);
			\fill (4,0.33) node[above right]{$f(x_2)$} circle (2pt);

			\draw[-] (2,0) -- (2,-0.1) node[below] {$1$};
			\draw[-] (4,0) -- (4,-0.1) node[below] {$\alpha$};
			\draw[-] (0,0.33) -- (-0.1,0.33) node[left] {$f_2(x_2)$};
			\draw[-] (0,1) -- (-0.1,1) node[left] {$(\alpha +1) \cdot f_2(x_2)$};
			\end{tikzpicture}
			\caption{Illustration of Example~\ref{ex:onecone}. The solution~$x_2$ is not optimal with respect to~$\leqq_\gamma^\utheta$ and not $\alpha$-approximated by~$x_1$ (with respect to~$\leqq$).\label{fig:oneconeexample}}
		\end{center}
	\end{figure}
	We obtain the following proposition:
	\begin{proposition}\label{prop:onecone}
		For any $\gamma \in (\frac \pi 2,\pi]$ and $\utheta \in [0, \gamma - \frac \pi 2] \setminus \{\gamma - \pi, \frac \pi 2\}$, and any $\alpha \geq 1$, there exists an instance $I$ of a biobjective minimization problem such that the set of optimal solutions with respect to $\leqq_\gamma^\utheta$ is not an $\alpha$-approximation.
	\end{proposition}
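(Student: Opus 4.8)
The plan is to leverage the explicit instance already built in Example~\ref{ex:onecone}, which witnesses the claim whenever $\alpha > 1$ and $\utheta > 0$, and then to cover the two remaining regimes, namely $\utheta = 0$ and $\alpha = 1$, by separate but closely related constructions. The common geometric principle is that, since $\gamma > \frac\pi2$, we have $\utheta + \otheta = \gamma - \frac\pi2 > 0$, so at least one of the two boundary angles $\utheta, \otheta$ is strictly positive; equivalently, $C_\gamma^\utheta$ strictly contains $\R^2_\geqq$ and hence admits a direction $d \in C_\gamma^\utheta \setminus \R^2_\geqq$. Placing two feasible points whose difference equals such a direction $d$ yields a solution that is dominated with respect to $\leqq_\gamma^\utheta$ but remains efficient with respect to $\leqq$, which is exactly the obstruction we exploit.

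For the case $\alpha > 1$, if $\utheta > 0$ the instance of Example~\ref{ex:onecone} applies directly: its set of optimal solutions with respect to $\leqq_\gamma^\utheta$ equals $\{x_1\}$, while $x_2$, which is efficient with respect to $\leqq$, is not $\alpha$-approximated by $x_1$. If instead $\utheta = 0$, then necessarily $\gamma < \pi$ and $\otheta = \gamma - \frac\pi2 \in (0, \frac\pi2)$. Here I would invoke the reflection $S(y_1,y_2) = (y_2,y_1)$, which preserves $\leqq$ and satisfies $S(C_\gamma^\utheta) = C_\gamma^{\otheta}$, since it interchanges the two boundary angles of the cone. Applying $S$ to the objective vectors of the Example~\ref{ex:onecone} instance built for the parameter $\otheta$ in place of $\utheta$ produces an instance in which optimality with respect to $\leqq_\gamma^0$ corresponds to optimality with respect to $\leqq_\gamma^{\otheta}$ and $\alpha$-approximation with respect to $\leqq$ is preserved; this transported instance witnesses the claim for $\utheta = 0$.

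For the case $\alpha = 1$, the formulas of Example~\ref{ex:onecone} degenerate, as they force a vanishing objective value, so I would use a direct two-point construction instead. Fix any direction $d \in C_\gamma^\utheta \setminus \R^2_\geqq$; for $\utheta > 0$ one may take $d = (1,-s)$ with $s > 0$ small, and for $\utheta = 0$ one takes $d = (-s,1)$. Set $f(x_1) = (a,b)$ with $a,b > 0$ large enough that $f(x_2) \coloneqq f(x_1) + d > 0$. By Lemma~\ref{lem:order-transformation} and the choice $d \in C_\gamma^\utheta$, we have $f(x_1) \leqq_\gamma^\utheta f(x_2)$ with $f(x_1) \neq f(x_2)$, so $x_2$ is not optimal with respect to $\leqq_\gamma^\utheta$ and the set of $\leqq_\gamma^\utheta$-optimal solutions is contained in $\{x_1\}$. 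Since $d \notin \R^2_\geqq$ and $-d \notin \R^2_\geqq$, neither of $x_1, x_2$ dominates the other with respect to $\leqq$, so $x_2$ is efficient; in particular $f(x_1) \not\leqq f(x_2)$, whence $x_1$ does not $1$-approximate $x_2$. Thus no subset of $\{x_1\}$ is a $1$-approximation.

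The routine but essential verifications are that all constructed objective values are strictly positive, so that the instances are admissible, and that the claimed $\leqq_\gamma^\utheta$-domination relations hold; both reduce via Lemma~\ref{lem:order-transformation} to sign checks on the components of $T_\gamma^\utheta(d)$. I expect the only genuinely delicate point to be the bookkeeping in the reflection argument for $\utheta = 0$: one must confirm that $\otheta = \gamma - \frac\pi2$ lies in the admissible range $(0,\frac\pi2)$ required to invoke Example~\ref{ex:onecone}, and that the correspondence $S(C_\gamma^0) = C_\gamma^{\otheta}$ indeed transports both optimality with respect to $\leqq_\gamma^\utheta$ and $\alpha$-approximation with respect to $\leqq$ correctly.
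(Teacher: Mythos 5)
Your proof is correct and, in its substantive cases, coincides with the paper's: for $\alpha > 1$ and $\utheta > 0$ you invoke Example~\ref{ex:onecone} directly, and for $\utheta = 0$ you exchange the two coordinates and work with the parameter $\otheta = \gamma - \frac \pi 2 \in (0, \frac \pi 2)$, which is exactly the paper's argument (your check that the reflection maps $C_\gamma^{\otheta}$ to $C_\gamma^{0}$ and preserves both $\leqq$ and $\alpha$-approximation is the bookkeeping the paper leaves implicit). The only divergence is the case $\alpha = 1$: you rightly note that the example degenerates there (the second objective values vanish, violating positivity) and you build a fresh two-point instance $f(x_2) = f(x_1) + d$ with $d \in C_\gamma^\utheta \setminus \R^2_\geqq$, which is sound --- including the point that $-d \notin C_\gamma^\utheta$ (by pointedness for $\gamma < \pi$, by interiority of $d$ for $\gamma = \pi$) so that $x_1$ stays optimal --- but it is more work than necessary. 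The paper instead observes in one line that the $\alpha = 1$ claim is a direct implication of the claim for any $\alpha > 1$: a $1$-approximation is automatically an $\alpha$-approximation for every $\alpha \geq 1$, so the instance already constructed for some $\alpha > 1$ also witnesses failure at $\alpha = 1$. Your route buys a self-contained construction; the paper's buys brevity via this monotonicity of the approximation notion.
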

	\begin{proof}
		For $\alpha > 1$ and $\utheta \neq 0$, the claim follows from Example~\ref{ex:onecone}. If $\alpha > 1$ and $\utheta = 0$, we have $\otheta \neq 0$, since $\gamma \neq \frac \pi 2$. Thus, we can simply exchange $f_1$ and $f_2$ and replace $\utheta$ by $\otheta$ in Example~\ref{ex:onecone} to obtain the claim. The claim for $\alpha = 1$ is a direct implication of the claim for any $\alpha > 1$. \qed
	\end{proof}
	
	Proposition~\ref{prop:onecone} states that the set of optimal solutions with respect to $\leqq_\gamma^\utheta$ for a single fixed pair of parameters~$(\gamma, \utheta)$ does not yield any approximation guarantee for~$I$. This is unsurprising: If the set of optimal solutions with respect to~$\leqq_\gamma^\utheta$ yielded any approximation guarantee, this would mean that, for the special case $\gamma = \pi$, where $I_\gamma^\utheta$ is a weighted sum scalarization of $I$, the (often unique) optimal solution of this scalarization would already yield an approximation guarantee in general.
	
	In the case $\gamma = \pi$, one is typically more interested in the set of supported solutions, i.e., the set of solutions that are optimal with respect to~$\leqq_\pi^\utheta$ for some (arbitrary)~$\utheta \in (0, \frac \pi 2)$. It is well-known that, for any biobjective minimization problem instance, the set of supported solutions is a $2$-approximation~\cite{Glasser+etal:multi-hardness}. We state this result using our terminology.
	\begin{theorem}[Gla{\ss}er et al.~\cite{Glasser+etal:multi-hardness}]\label{thm:ws-2-approx}
		For any biobjective minimization problem instance~$I$, let $X_W \subseteq X$ be a set of solutions that\textcolor{black}{, for any~$\utheta \in (0,\frac \pi 2)$, contains one optimal solution with respect to $\leqq_\pi^\utheta$.} Then $X_W$ is a $2$-approximation. 
	\end{theorem}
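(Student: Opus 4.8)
The plan is to fix an arbitrary feasible solution $x^* \in X$ and exhibit a single supported solution in $X_W$ that $2$-approximates it with respect to $\leqq$; since $x^*$ is arbitrary, this establishes that $X_W$ is a $2$-approximation. The guiding idea is to choose the weights of a weighted sum scalarization reciprocal to the two objective values of $x^*$, which balances the contributions of the two objectives at $x^*$ and makes the optimal value split evenly between them.

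Concretely, writing $a \coloneqq f_1(x^*) > 0$ and $b \coloneqq f_2(x^*) > 0$, I would pick the unique angle $\utheta \in (0, \frac\pi 2)$ for which $\tan \utheta = b/a$, so that the positive weights $\sin \utheta$ and $\cos \utheta$ of the weighted sum scalarization are proportional to $(1/a, 1/b)$; explicitly, $\sin \utheta = b/\sqrt{a^2+b^2}$ and $\cos \utheta = a/\sqrt{a^2+b^2}$. Recall from the discussion of the case $\gamma = \pi$ that $I_\pi^\utheta$ is exactly the single objective instance $\min_{x \in X}\ \sin \utheta \cdot f_1(x) + \cos \utheta \cdot f_2(x)$, so being optimal with respect to $\leqq_\pi^\utheta$ means minimizing this weighted sum. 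By hypothesis, $X_W$ contains a solution $x$ that is optimal with respect to $\leqq_\pi^\utheta$ for this particular $\utheta$.

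Optimality of $x$ then yields $\sin \utheta \cdot f_1(x) + \cos \utheta \cdot f_2(x) \le \sin \utheta \cdot a + \cos \utheta \cdot b$. Multiplying through by $\sqrt{a^2+b^2}$, the right-hand side collapses to $b \cdot a + a \cdot b = 2ab$, giving $b \cdot f_1(x) + a \cdot f_2(x) \le 2ab$. The final step exploits positivity: since $f_1(x), f_2(x) > 0$ and $a, b > 0$, both summands on the left are nonnegative, so each is individually bounded by $2ab$. Dividing the first by $b$ and the second by $a$ yields $f_1(x) \le 2a = 2 f_1(x^*)$ and $f_2(x) \le 2b = 2 f_2(x^*)$, that is, $f(x) \leqq 2 \cdot f(x^*)$, which is precisely the statement that $x^*$ is $2$-approximated by $x \in X_W$ with respect to $\leqq$.

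I do not expect a serious obstacle here: the argument is short once the reciprocal weighting is chosen, and the factor $2$ arises exactly because each objective contributes an equal summand $ab$ to the optimal-value bound at $x^*$. The one point requiring genuine care is the splitting step, where each of the two nonnegative terms is bounded separately by the total; this relies essentially on the standing assumption that all objective values are strictly positive. Without it, a negative term could mask a large value in the other coordinate, and the per-coordinate bounds $f_i(x) \le 2 f_i(x^*)$ would fail.
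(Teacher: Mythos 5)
Your proof is correct, and in substance it is the classical Gla{\ss}er et al.\ argument that the paper imports by citation: the paper never proves Theorem~\ref{thm:ws-2-approx} on its own, but instead recovers it as the special case $\gamma = \pi$, $\alpha = 1$ of Theorem~\ref{thm:cone-approx}, whose proof runs through the weighted max-ordering scalarization machinery (Lemmas~\ref{lem:max-ordering-approx}, \ref{lem:general-approx}, \ref{lem:right-weights} and Proposition~\ref{prop:Q-approx}). Your direct route is exactly what that machinery collapses to at $\gamma = \pi$: your choice $\tan\utheta = f_2(x^*)/f_1(x^*)$ is condition~\eqref{eq:q-definition} specialized to $\gamma = \pi$ (where $\otheta = \frac\pi2 - \utheta$, so $\sqrt{\tan\otheta}/\sqrt{\tan\utheta} = \cot\utheta$); your splitting of $b\,f_1(x) + a\,f_2(x) \le 2ab$ into the two componentwise bounds via positivity is the $\gamma=\pi$ instance of the division-by-$\cos\otheta$ and by-$\cos\utheta$ step in Lemma~\ref{lem:general-approx}; and your factor $2$ is $1 + \sqrt{\tan\utheta\cdot\tan\otheta}$ with $\tan\utheta \cdot \tan\otheta = 1$ holding identically when $\otheta = \frac\pi2-\utheta$, which is Lemma~\ref{lem:utheta-otheta-leq-mtheta} with equality since $\tan\mtheta = \tan\frac\pi4 = 1$. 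What your version buys is brevity and self-containment: the max-ordering detour in the paper is needed only because, for $\gamma < \pi$, optimality with respect to $\leqq_\gamma^\utheta$ alone does not single out a useful solution (Example~\ref{ex:onesolution}), whereas at $\gamma = \pi$ both components of $T_\pi^\utheta \circ f$ coincide with the weighted sum, so plain optimality suffices, exactly as you use it (if some $x'$ had strictly smaller weighted sum than $x$, then $f(x') \neq f(x)$ and $f(x') \leqq_\pi^\utheta f(x)$, contradicting optimality). Your closing caveat is also the right one: bounding each summand by the whole is licensed precisely by the standing assumption $f_i(x) > 0$, and the theorem genuinely fails without it.
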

	
	Our goal is to generalize Theorem~\ref{thm:ws-2-approx} to arbitrary values of $\gamma \in [\frac \pi 2, \pi]$. More precisely, we want to obtain a result about the approximation guarantee achievable by solutions that are optimal with respect to~$\leqq_\gamma^\utheta$ for some fixed $\gamma \in [\frac \pi 2, \pi]$ but arbitrary $\utheta \in [0, \gamma- \frac \pi 2]\setminus \{\gamma - \pi, \frac \pi 2\}$. Example~\ref{ex:onesolution} shows that, for $\gamma \in (\frac \pi 2, \pi)$, it does not suffice to require a single arbitrary optimal solution for each~$\utheta$, as it is the case for~$\gamma = \pi$.
	
	\begin{example}\label{ex:onesolution}
		Let $\gamma \in (\frac \pi 2, \pi)$ and $\alpha \geq 1$. Consider the following instance of a biobjective minimization problem (see also Figure~\ref{fig:onesolutionexample}): The feasible set consists of exactly two solutions $x_1, x_2$ with $f_1(x_1) = \alpha + 1$, $f_2(x_1) = 1$, $f_1(x_2) = 1$, and $f_2(x_2) = \frac{-\cos \gamma}{\sin \gamma} \cdot (\alpha + 1) + 1$.
		
		Note that, for any $\utheta \in [0,\gamma-\frac \pi 2]$, we have $0 \leq \sin \utheta \leq - \cos \gamma$, where the first inequality is strict if $\utheta \neq 0$ and the second inequality is strict if $\utheta \neq \gamma - \frac \pi 2$, and we have $0 < \sin \gamma \leq \cos \utheta$, where, again, the second inequality is strict if  $\utheta \neq \gamma - \frac \pi 2$. Therefore, for any $\utheta \in [0,\gamma-\frac \pi 2]$, the following holds for the second objective function of  $I_\gamma^\utheta$:
		\begin{align*}
		\sin\utheta \cdot f_1(x_1) + \cos \utheta \cdot f_2(x_1) &= \sin \utheta \cdot  (\alpha + 1) + \cos \utheta\\
		&\leq \sin \utheta \cdot (\alpha + 1) + \cos \utheta + \sin \utheta\\
		& \leq (-\cos \gamma) \cdot (\alpha + 1) + \cos \utheta + \sin \utheta\\
		& \leq \frac{\cos \utheta}{\sin \gamma} \cdot  (-\cos \gamma)\cdot (\alpha + 1) + \cos \utheta + \sin \utheta\\
		&= \sin \utheta +\cos \utheta \cdot \left(\frac{- \cos \gamma}{\sin \gamma}\cdot (\alpha + 1) + 1\right)\\
		&=\sin\utheta \cdot f_1(x_2) + \cos \utheta \cdot f_2(x_2),
		\end{align*}
		where, if $\utheta \neq 0$, the first inequality is strict, and, if $\utheta \neq \gamma - \frac \pi 2$, the second and third inequalities are strict. Thus, $x_1$ is optimal with respect to~$\leqq_\gamma^\utheta$ for any $\utheta \in [0, \gamma- \frac \pi 2]$. On the other hand, $x_1$ does not $\alpha$-approximate $x_2$ (with respect to~$\leqq$).
	\end{example}
	\begin{figure}[ht!]
		\begin{center}
			\begin{tikzpicture}[scale=1]
			
			
			
			
			\draw[->] (-0.2,0) -- (5.2,0) node[below right] {$f_1$};
			\draw[->] (0,-0.2) -- (0,5.1) node[above left] {$f_2$};

			\draw[-] (1,3.4) -- (1,5);
			\draw[-] (1,3.4) -- (5,0.2);
			
			\draw[dotted] (1,3.4) -- (5,3.4);
			\draw[dotted] (1,3.4) -- (0,4.65);
			
			\draw[dashed] (1,3.4) -- (0.44,5);
			\draw[dashed] (1,3.4) -- (5,2);

			\fill (3,1) node[below left]{$f(x_1)$} circle (2pt);
			\fill (1,3.4) node[below left]{$f(x_2)$} circle (2pt);
			
			\draw[dashed] (1.8,3.4) arc (0:-19.33:0.8);
			\draw[dashed] (1.8,3.4) arc (0:109.33:0.8);
			\draw[] (1.7,3.4) arc (0:-38.66:0.7);
			\draw[] (1.7,3.4) arc (0:90:0.7);
			\draw[dotted] (1.9,3.4) arc (0:128.66:0.9);
			
			\node[] at (1.25,3.65)  {$\gamma$};
			
			\draw[-] (1,0) -- (1,-0.1) node[below] {$1$};
			\draw[-] (3,0) -- (3,-0.1) node[below] {$\alpha + 1$};
			\draw[-] (0,1) -- (-0.1,1) node[left] {$f_2(x_1)$};
			\draw[-] (0,3.4) -- (-0.1,3.4) node[left] {$f_2(x_2)$};
			\end{tikzpicture}
			\caption{Illustration of Example~\ref{ex:onesolution}. The dominance cone of $f(x_2)$ in $I_\gamma^\utheta$ is illustrated for $\utheta = 0$ (dotted), $\utheta = \mtheta$ (dashed), and $\utheta = \gamma - \frac \pi 2$ (solid). The solution~$x_1$ is not dominated by $x_2$ and is, thus, optimal with respect to $\leqq_\gamma^\utheta$ for any $\utheta \in [0, \gamma- \frac \pi 2]$. \label{fig:onesolutionexample}}
		\end{center}
	\end{figure}
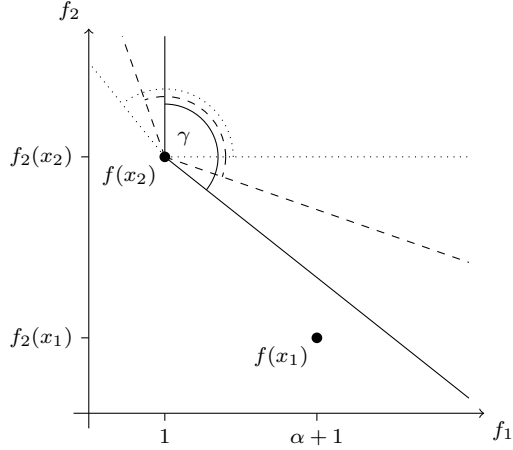

	We now generalize Theorem~\ref{thm:ws-2-approx} to arbitrary values of $\gamma$. We will see that, for any~$\gamma \in [\frac \pi 2, \pi]$, the set of $\gamma$-supported solutions is an approximation. The approximation guarantee obtained from our result is equal to 1 for $\gamma = \frac \pi 2$, is equal to 2 for $\gamma = \pi$, and, interestingly,  increases continuously in between depending on $\gamma$.
	
	Moreover, for any $\gamma \in (\frac \pi 2, \pi]$ and $\utheta \in [0, \gamma- \frac \pi 2]\setminus \{\gamma - \pi, \frac \pi 2\}$, we provide a weighted max-ordering scalarization of~$I_\gamma^\utheta$ such that, for fixed $\gamma$, a set containing only one optimal solution of this scalarization for each $\utheta$ yields the same approximation guarantee (analogous to Threorem~\ref{thm:ws-2-approx}). For $\gamma = \pi$, this scalarization\footnote{We use the term ``scalarization''  here even though a scalarization of a single objective problem does not really deserve the name.} naturally yields the (single objective) instance itself, so, this result is indeed a generalization of Theorem~\ref{thm:ws-2-approx}. We further generalize this result to approximate solutions of the provided scalarization.
	
	First, note the following simple property of weighted max-ordering scalarizations:
	
	\begin{lemma}\label{lem:max-ordering-approx}
		Let $I = (X,f)$ be a biobjective minimization problem instance, let $\alpha \geq 1$, and let $w_1, w_2 > 0$ be given. Let $x \in X$ be an $\alpha$-approximate solution for the weighted max-ordering scalarization of~$I$ with weights $w_1,w_2$ and let $x' \in X$ be a solution such that $w_1 \cdot f_1(x') = w_2 \cdot f_2(x')$.
		Then $x'$ is $\alpha$-approximated by $x$ in~$I$. 
	\end{lemma}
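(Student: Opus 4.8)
The plan is to unpack what it means for $x$ to be an $\alpha$-approximate solution of the weighted max-ordering scalarization and then specialize this to the solution $x'$. By the single-objective part of Definition~\ref{def:approximation}, $x$ being $\alpha$-approximate means that
\begin{align*}
\max\{w_1 \cdot f_1(x), w_2 \cdot f_2(x)\} \leq \alpha \cdot \max\{w_1 \cdot f_1(z), w_2 \cdot f_2(z)\}
\end{align*}
holds for every feasible $z \in X$. The first step is simply to apply this inequality with the particular choice $z = x'$.

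The second step exploits the balance hypothesis $w_1 \cdot f_1(x') = w_2 \cdot f_2(x')$. Since the two weighted objective values at $x'$ coincide, their maximum equals their common value, so the right-hand side collapses and we get
\begin{align*}
\max\{w_1 \cdot f_1(x), w_2 \cdot f_2(x)\} \leq \alpha \cdot w_1 \cdot f_1(x') = \alpha \cdot w_2 \cdot f_2(x').
\end{align*}
This is the only place where the special structure of $x'$ is used.

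The third step is to split this single inequality involving the maximum into two componentwise inequalities. Since each of $w_1 \cdot f_1(x)$ and $w_2 \cdot f_2(x)$ is bounded above by their maximum, I obtain $w_i \cdot f_i(x) \leq \alpha \cdot w_i \cdot f_i(x')$ for $i = 1, 2$; dividing by the positive weight $w_i$ then yields $f_i(x) \leq \alpha \cdot f_i(x')$ for both indices, that is, $f(x) \leqq \alpha \cdot f(x')$. Recalling that $\leqq$ is exactly $\leqq_{\frac \pi 2}^0$ and that $\alpha$-approximation in $I$ is defined (Definition~\ref{def:approximation}) via precisely $f(x) \leqq \alpha \cdot f(x')$, this is the desired conclusion that $x'$ is $\alpha$-approximated by $x$ in $I$.

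I do not expect any genuine obstacle in this argument; it is a short chain of elementary inequalities. The only two points requiring minor care are that the definition of an $\alpha$-approximate solution quantifies over \emph{all} feasible solutions, so it may legitimately be instantiated at $x'$, and that positivity of the weights $w_1, w_2 > 0$ is what allows the passage from the weighted inequalities back to the unweighted componentwise inequalities.
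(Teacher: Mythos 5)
Your proposal is correct and follows essentially the same route as the paper's proof: both instantiate the max-ordering guarantee at $x'$, collapse the right-hand maximum via $w_1 \cdot f_1(x') = w_2 \cdot f_2(x')$, bound each weighted component by the maximum on the left, and cancel the positive weights. The paper merely writes this chain componentwise and leaves the quantifier instantiation and weight cancellation implicit, so your version adds no new idea but is a faithful, slightly more explicit rendering of the same argument.
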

	\begin{proof}
		In the first component, we have
		\begin{align*}
		w_1 \cdot f_1(x) & \leq \max\{w_1 \cdot f_1(x),w_2 \cdot f_2(x) \}\\
		&\leq \alpha \cdot \max\{w_1 \cdot f_1(x'),w_2 \cdot f_2(x') \}\\
		& = \alpha \cdot w_1 \cdot f_1(x').
		\end{align*}
		The approximation guarantee in the second component follows analogously. \qed
	\end{proof}
	
	The following lemma states that a solution $x \in X$ that approximates another solution $x' \in X$ with respect to $\leqq_\gamma^\utheta$ for some $\gamma$ and $\utheta$ also approximates~$x'$ with respect to $\leqq$ by some factor. This factor depends on  $\gamma$, $\utheta$, and~$f(x')$. Note that, by Proposition~\ref{prop:onecone}, we cannot expect this factor to depend solely on~$\gamma$ and~$\utheta$.
	
	\begin{lemma}\label{lem:general-approx}
		Let $\gamma \in [\frac \pi 2, \pi]$, $\utheta \in [0, \gamma- \frac \pi 2]\setminus \{\gamma - \pi, \frac \pi 2\}$, $\alpha \geq 1$, and let $I = (X,f)$ be a biobjective minimization problem instance. Let $x' \in X$ be $\alpha$-approximated by $x \in X$ with respect to $\leqq_\gamma^\utheta$. Then $x$ approximates $x'$ (with respect to $\leqq$) with factor
		\begin{align*}
		\alpha \cdot \left(1+ \max\left\{\frac{f_1(x')}{f_2(x')} \cdot \tan \utheta, \frac{f_2(x')}{f_1(x')} \cdot \tan \otheta\right\}\right).
		\end{align*}
	\end{lemma}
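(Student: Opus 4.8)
The plan is to reduce the statement about $\leqq_\gamma^\utheta$ to the explicit componentwise inequalities delivered by the transformation $T_\gamma^\utheta$, and then to extract separate bounds on $f_1(x)$ and $f_2(x)$. By Lemma~\ref{lem:order-transformation} together with the linearity of $T_\gamma^\utheta$, the hypothesis $f(x) \leqq_\gamma^\utheta \alpha \cdot f(x')$ is equivalent to $T_\gamma^\utheta(f(x)) \leqq \alpha \cdot T_\gamma^\utheta(f(x'))$. Writing out the matrix of $T_\gamma^\utheta$ from its definition, this amounts to the two scalar inequalities
\begin{align*}
\cos \otheta \cdot f_1(x) + \sin \otheta \cdot f_2(x) &\leq \alpha \cdot \left( \cos \otheta \cdot f_1(x') + \sin \otheta \cdot f_2(x') \right),\\
\sin \utheta \cdot f_1(x) + \cos \utheta \cdot f_2(x) &\leq \alpha \cdot \left( \sin \utheta \cdot f_1(x') + \cos \utheta \cdot f_2(x') \right).
\end{align*}

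Next I would isolate the first coordinate from the first inequality and the second coordinate from the second. The key observation is that, for every admissible pair $(\gamma, \utheta)$, both angles $\utheta$ and $\otheta = \gamma - \frac{\pi}{2} - \utheta$ lie in $[0, \frac{\pi}{2})$: for $\gamma < \pi$ because $\utheta, \otheta \in [0, \gamma - \frac{\pi}{2}] \subseteq [0, \frac{\pi}{2})$, and for $\gamma = \pi$ because then $\utheta \in (0, \frac{\pi}{2})$ and $\otheta = \frac{\pi}{2} - \utheta \in (0, \frac{\pi}{2})$. Hence $\cos \utheta, \cos \otheta > 0$ and $\sin \utheta, \sin \otheta \geq 0$, and since $f_1(x), f_2(x) > 0$, I may drop the nonnegative term $\sin \otheta \cdot f_2(x)$ from the left of the first inequality and the nonnegative term $\sin \utheta \cdot f_1(x)$ from the left of the second. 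Dividing the resulting inequalities by $\cos \otheta$ and $\cos \utheta$, respectively, yields
\begin{align*}
f_1(x) &\leq \alpha \cdot \left( f_1(x') + \tan \otheta \cdot f_2(x') \right) = \alpha \cdot f_1(x') \cdot \left(1 + \frac{f_2(x')}{f_1(x')} \cdot \tan \otheta \right),\\
f_2(x) &\leq \alpha \cdot \left( \tan \utheta \cdot f_1(x') + f_2(x') \right) = \alpha \cdot f_2(x') \cdot \left(1 + \frac{f_1(x')}{f_2(x')} \cdot \tan \utheta \right).
\end{align*}

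Finally, I would take the common factor to be the larger of the two coordinatewise correction factors, so that both $f_1(x) \leq \beta \cdot f_1(x')$ and $f_2(x) \leq \beta \cdot f_2(x')$ hold with
\[
\beta = \alpha \cdot \left(1 + \max\left\{ \frac{f_1(x')}{f_2(x')} \cdot \tan \utheta, \; \frac{f_2(x')}{f_1(x')} \cdot \tan \otheta \right\}\right),
\]
which is exactly the claimed factor; by Definition~\ref{def:approximation} this means that $x$ approximates $x'$ with respect to $\leqq$ with factor $\beta$. I do not expect a genuine obstacle here: the only points requiring care are the sign bookkeeping---verifying that the exclusion of $\utheta \in \{\gamma - \pi, \frac{\pi}{2}\}$ guarantees $\cos \utheta, \cos \otheta > 0$, so that the divisions are legitimate and no degenerate half-space-type direction (where $\otheta = \frac{\pi}{2}$) is divided through---and the symmetric handling of the two coordinates, in which the roles of $\utheta$ and $\otheta$ interchange.
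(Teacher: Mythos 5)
Your proposal is correct and follows essentially the same route as the paper's proof: both reduce the hypothesis to the two componentwise inequalities given by $T_\gamma^\utheta$ (the paper writes this as $f_1(x) \leq \frac{1}{\cos\otheta}(\cos\otheta \cdot f_1(x) + \sin\otheta \cdot f_2(x))$, which is exactly your step of dropping the nonnegative cross term and dividing by the positive cosine), then bound each coordinate separately and take the maximum of the two correction factors. Your explicit verification that the exclusions $\utheta \notin \{\gamma - \pi, \frac{\pi}{2}\}$ force $\utheta, \otheta \in [0, \frac{\pi}{2})$, hence $\cos\utheta, \cos\otheta > 0$, is a point the paper leaves implicit, but the argument is the same.
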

	\begin{proof}
		In the first component, we obtain
		\begin{align*}
		f_1(x) &\leq \frac 1 {\cos \otheta} \cdot \left(\cos \otheta \cdot f_1(x) + \sin \otheta \cdot f_2(x)\right)\\
		& \leq \frac 1 {\cos \otheta}  \cdot \alpha \cdot \left(\cos \otheta \cdot f_1(x') + \sin \otheta \cdot f_2(x')\right)\\
		&= \alpha \cdot \left(1+\tan \otheta \cdot \frac{f_2(x')}{f_1(x')}\right) \cdot f_1(x').
		\end{align*}
		Similarly, in the second component, we obtain
		\begin{align*}	
		f_2(x) &\leq \frac 1 {\cos \utheta} \cdot \left(\sin \utheta \cdot f_1(x) + \cos \utheta \cdot f_2(x)\right)\\
		& \leq \frac 1 {\cos \utheta}  \cdot \alpha \cdot \left(\sin \utheta \cdot f_1(x') + \cos \utheta \cdot f_2(x')\right)\\
		&= \alpha \cdot \left(1+\tan \utheta \cdot \frac{f_1(x')}{f_2(x')}\right) \cdot f_2(x').
		\end{align*}
		This immediately yields the claimed approximation guarantee.
		\qed
	\end{proof}
	
	The next lemma states that, for $\gamma \in (\frac \pi 2,\pi]$, $\utheta \in (0, \gamma - \frac \pi 2)$, and an instance $I = (X,f)$, if we use the weights $w_1 = \frac{\sqrt{\sin \utheta}}{\sqrt{\cos \otheta}} + \frac{\sqrt{\cos \utheta}}{\sqrt{\sin \otheta}}$ and $w_2 = \frac{\sqrt{\sin \otheta}}{\sqrt{\cos \utheta}} + \frac{\sqrt{\cos \otheta}}{\sqrt{\sin \utheta}}$ for a weighted max-ordering scalarization of $I_\gamma^\utheta$, then any solution $x' \in X$ for which $\frac{f_1(x')}{f_2(x')}  =\frac{\sqrt{\tan\otheta}}{\sqrt{\tan\utheta}}$ meets the conditions of Lemma~\ref{lem:max-ordering-approx}. This scalarization is illustrated in Figure~\ref{fig:conepush}.
	
	\begin{figure}[ht!]
		\begin{center}
			\begin{tikzpicture}[scale=1]
			\draw[fill,gray!45] (0,0) -- (4,0) -- (3,2) -- (0,8/3)-- (0,0);
			\draw[->] (-0.2,0) -- (7.1,0) node[below right] {$f_1$};
			\draw[->] (0,-0.2) -- (0,4.5) node[above left] {$f_2$};

			\draw[dashed] (0,0) -- (6,4);
			
			\draw[-] (3,2) -- (4,0);
			\draw[-] (3,2) -- (0,8/3);
			
			\draw[dashed] (1.5,1) -- (1.5,0);
			\draw[dashed] (1.5,1) -- (0,1);
			
			\draw[dashed] (3,2) -- (3,0);
			\draw[dashed] (3,2) -- (0,2);

			\fill (1.5,7/3) node[above]{$f(x)$} circle (3pt);
			\fill (1,3.4) circle (2pt);
			\fill (2,3.4) circle (2pt);
			\fill (2.2,4.2) circle (2pt);
			\fill (3.1,2.7) circle (2pt);
			\fill (3.8,1.6) circle (2pt);
			\fill (3.8,4.1) circle (2pt);
			\fill (4.1,0.8) circle (2pt);
			\fill (4.2,3.3) circle (2pt);
			\fill (4.4,2.5) circle (2pt);
			\fill (4.5,1.8) circle (2pt);
			\fill (5.3,2.8) circle (2pt);
			\fill (5.9,1) circle (2pt);
			
			\draw (1.8,2) arc (180:168.13:1.2);
			\node[] at (1.9,1.8)  {$\utheta$};
			\draw[-] (2.05,2.1) -- (1.95,1.95);
			
			\draw (3,1) arc (270:296.57:1);
			\node[] at (3.75,1.3)  {$\otheta$};
			\draw[-] (3.2,1.2) -- (3.5,1.3);
			
			\draw (2.4,2) arc (180:168.13:0.6);
			\draw (2.4,2) arc (180:296.57:0.6);
			\node[] at (2.85,1.7)  {$\gamma$};

			\draw[-] (1.5,0.1) -- (1.5,-0.1) node[below] {$\sqrt{\tan \otheta}$};
			\draw[-] (0.1,1) -- (-0.1,1) node[left] {$\sqrt{\tan \utheta}$};
			\draw[-] (3,0.1) -- (3,-0.1) node[below] {$c_1$};
			\draw[-] (0.1,2) -- (-0.1,2) node[left] {$c_2$};
			\draw[-] (4,0.1) -- (4,-0.1) node[below] {$d_1$};
			\draw[-] (0.1,8/3) -- (-0.1,8/3) node[left] {$d_2$};
			\end{tikzpicture}
			\caption{Illustration of the weighted max-ordering scalarization of $I_\gamma^\utheta$ with weights  $w_1 = \frac{\sqrt{\sin \utheta}}{\sqrt{\cos \otheta}} + \frac{\sqrt{\cos \utheta}}{\sqrt{\sin \otheta}}$ and $w_2 = \frac{\sqrt{\sin \otheta}}{\sqrt{\cos \utheta}} + \frac{\sqrt{\cos \otheta}}{\sqrt{\sin \utheta}}$ for given $\gamma \in (\frac \pi 2, \pi]$ and $\utheta \in (0, \gamma - \frac \pi 2)$. The solution~$x$ is optimal for this scalarization so there does not exist any feasible point in the gray region. For this choice of weights, we have $\frac{d_1}{c_1} = \frac{d_2}{c_2} = 1 + \sqrt{\tan \utheta} \cdot \sqrt{\tan \otheta}$ (see Proposition~\ref{prop:Q-approx}).  \label{fig:conepush}}
		\end{center}
	\end{figure}
	
	\begin{lemma}\label{lem:right-weights}
		Let  $\gamma \in (\frac \pi 2, \pi]$, $\utheta \in (0, \gamma - \frac \pi 2)$, and let $I = (X,f)$ be a biobjective minimization problem instance. Let $x' \in X$ such that $\frac{f_1(x')}{f_2(x')}  = \frac{\sqrt{\tan\otheta}}{\sqrt{\tan\utheta}}$.
		Moreover, let $w_1 = \frac{\sqrt{\sin \utheta}}{\sqrt{\cos \otheta}} + \frac{\sqrt{\cos \utheta}}{\sqrt{\sin \otheta}}$ and $w_2 = \frac{\sqrt{\sin \otheta}}{\sqrt{\cos \utheta}} + \frac{\sqrt{\cos \otheta}}{\sqrt{\sin \utheta}}$.
		Then
		\begin{align*}
		w_1 \cdot \left(\cos \otheta \cdot f_1(x') + \sin \otheta \cdot f_2(x') \right) = 
		w_2 \cdot \left(\sin \utheta \cdot f_1(x')  + \cos \utheta \cdot f_2(x' )\right).
		\end{align*}
	\end{lemma}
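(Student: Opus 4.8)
The plan is to prove the identity by a direct computation that I first simplify using \emph{homogeneity}. Both sides of the claimed equation are homogeneous of degree one in the pair $(f_1(x'), f_2(x'))$, and the hypothesis constrains only the ratio $f_1(x')/f_2(x') = \sqrt{\tan\otheta}/\sqrt{\tan\utheta}$. Hence it suffices to verify the identity for a single convenient representative with this ratio, the general case following by scaling $f(x') \mapsto \lambda\, f(x')$. I would take $f_1(x') = \sqrt{\sin\otheta}\,\sqrt{\cos\utheta}$ and $f_2(x') = \sqrt{\cos\otheta}\,\sqrt{\sin\utheta}$, whose quotient is exactly $\sqrt{\tan\otheta}/\sqrt{\tan\utheta}$. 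Before computing, I would note that $\utheta \in (0, \gamma - \frac\pi2)$ together with $\otheta = \gamma - \frac\pi2 - \utheta$ and $\gamma \le \pi$ forces $\utheta, \otheta \in (0, \frac\pi2)$, so that every sine, cosine, and tangent appearing is strictly positive; in particular all square roots are real and positive and no denominator of $w_1$ or $w_2$ vanishes.

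The crux is a factorization showing that both sides collapse to the \emph{same} perfect square. For the left-hand side I would write $w_1$ over a common denominator as $w_1 = (\sqrt{\sin\utheta\sin\otheta} + \sqrt{\cos\utheta\cos\otheta})/\sqrt{\cos\otheta\sin\otheta}$ and observe that the accompanying bracket factors as
\[
\cos\otheta \cdot f_1(x') + \sin\otheta \cdot f_2(x') = \sqrt{\sin\otheta\cos\otheta}\,\bigl(\sqrt{\sin\utheta\sin\otheta} + \sqrt{\cos\utheta\cos\otheta}\bigr).
\]
The factor $\sqrt{\sin\otheta\cos\otheta}$ cancels the denominator of $w_1$, leaving
\[
w_1 \cdot \bigl(\cos\otheta \cdot f_1(x') + \sin\otheta \cdot f_2(x')\bigr) = \bigl(\sqrt{\sin\utheta\sin\otheta} + \sqrt{\cos\utheta\cos\otheta}\bigr)^2.
\]

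For the right-hand side I would proceed symmetrically: write $w_2 = (\sqrt{\sin\utheta\sin\otheta} + \sqrt{\cos\utheta\cos\otheta})/\sqrt{\cos\utheta\sin\utheta}$ and factor $\sin\utheta \cdot f_1(x') + \cos\utheta \cdot f_2(x') = \sqrt{\sin\utheta\cos\utheta}\,(\sqrt{\sin\utheta\sin\otheta} + \sqrt{\cos\utheta\cos\otheta})$. The factor $\sqrt{\sin\utheta\cos\utheta}$ cancels the denominator of $w_2$, and the right-hand side equals the very same square $(\sqrt{\sin\utheta\sin\otheta} + \sqrt{\cos\utheta\cos\otheta})^2$. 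Equating the two expressions then establishes the lemma.

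I do not expect a genuine obstacle here: the argument is elementary surd algebra once the homogeneity reduction has been made. The only non-mechanical insight is recognizing the common quantity $\sqrt{\sin\utheta\sin\otheta} + \sqrt{\cos\utheta\cos\otheta}$ that both sides share, which is precisely what makes the two factorizations line up (it is, in fact, the shared numerator of $w_1$ and $w_2$). The remaining care is purely in bookkeeping and in the positivity check from the first step, which ensures that all the square roots and cancellations are legitimate.
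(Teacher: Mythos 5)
Your proof is correct and takes essentially the same route as the paper: a direct algebraic verification after reducing, via the ratio hypothesis (equivalently, homogeneity), to a single representative value of $(f_1(x'),f_2(x'))$. Your choice of representative $\bigl(\sqrt{\sin\otheta\cos\utheta},\,\sqrt{\cos\otheta\sin\utheta}\bigr)$ is a slightly cleaner normalization than the paper's $\bigl(\sqrt{\tan\otheta},\,\sqrt{\tan\utheta}\bigr)$, making both sides collapse to the same perfect square $\bigl(\sqrt{\sin\utheta\sin\otheta}+\sqrt{\cos\utheta\cos\otheta}\bigr)^2$ rather than a common three-term expression, but the underlying computation is the same.
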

	\begin{proof}
		We know that $f_1(x') = \sqrt{\tan \otheta} \cdot \frac{f_2(x')}{\sqrt{\tan \utheta}}$, so it suffices to show that
		\begin{align*}
		w_1 \cdot \left(\cos \otheta \cdot \sqrt{\tan \otheta}+ \sin \otheta \cdot \sqrt{\tan \utheta} \right) = 
		w_2 \cdot \left(\sin \utheta \cdot \sqrt{\tan \otheta}  + \cos \utheta \cdot \sqrt{\tan \utheta}\right).
		\end{align*}
		Using the definition of $w_1,w_2$ and that $\tan = \frac \sin \cos$, this is a simple computation:
		\begin{align*}
		& \left(\frac{\sqrt{\sin \utheta}}{\sqrt{\cos \otheta}} + \frac{\sqrt{\cos \utheta}}{\sqrt{\sin \otheta}}\right) \cdot \left(\cos \otheta \cdot \sqrt{\tan \otheta}+ \sin \otheta \cdot \sqrt{\tan \utheta} \right)\\
		=& \; 2 \cdot \sqrt{\sin \utheta} \cdot \sqrt{\sin \otheta} + \sqrt{\cos \utheta} \cdot \sqrt{\cos \otheta} + \frac{\sin \utheta \cdot \sin \otheta}{\sqrt{\cos \utheta} \cdot \sqrt{\cos \otheta}}\\
		=& \; \left(\frac{\sqrt{\sin \otheta}}{\sqrt{\cos \utheta}} + \frac{\sqrt{\cos \otheta}}{\sqrt{\sin \utheta}}\right) \cdot \left(\sin \utheta \cdot \sqrt{\tan \otheta}+ \cos \utheta \cdot \sqrt{\tan \utheta} \right).
		\end{align*}\qed	
	\end{proof}
	The following proposition combines Lemma~\ref{lem:max-ordering-approx}, Lemma~\ref{lem:general-approx}, and Lemma~\ref{lem:right-weights}. It first states that, for $x'$, $\utheta$, $w_1$, and $w_2$ as in Lemma~\ref{lem:right-weights}, we can approximate~$x'$ not only in the corresponding weighted max-ordering scalarization but also in~$I_\gamma^\utheta$. Then it states that we even obtain an approximation factor for $I$. We will see that, for any solution~$x' \in X$, the angle $\utheta$ satisfying $\frac{f_1(x')}{f_2(x')}  = \frac{\sqrt{\tan\otheta}}{\sqrt{\tan\utheta}}$ corresponds to $x'$ in the sense that, in the maximum in the approximation factor provided in Lemma~\ref{lem:general-approx}, both terms are equal (a geometric explanation for this is given in Figure~\ref{fig:conepush}). Therefore, the approximation factor obtained for~$I$ depends only on~$\gamma$ and~$\utheta$ and does not involve a maximum.
	
	\begin{proposition}\label{prop:Q-approx}
		Let $\alpha \geq 1$, $\gamma \in (\frac \pi 2, \pi]$, $\utheta \in (0, \gamma- \frac \pi 2)$, and let $I = (X,f)$ be a biobjective minimization problem instance. For a solution~~$x \in X$ that is $\alpha$-approximate for the weighted max-ordering scalarization of $I_\gamma^\utheta$ with weights $w_1 = \frac{\sqrt{\sin \utheta}}{\sqrt{\cos \otheta}} + \frac{\sqrt{\cos \utheta}}{\sqrt{\sin \otheta}}$ and $w_2 = \frac{\sqrt{\sin \otheta}}{\sqrt{\cos \utheta}} + \frac{\sqrt{\cos \otheta}}{\sqrt{\sin \utheta}}$, any solution~$x' \in X$ with
		\begin{align}\frac{f_1(x')}{f_2(x')} = \frac{\sqrt{\tan\otheta}}{\sqrt{\tan\utheta}} \label{eq:q-definition}\end{align}
		\begin{enumerate}[(i)]
			\item is $\alpha$-approximated by $x$ with respect to~$\leqq_\gamma^\utheta$, and
			\item is $\left(\alpha \cdot \left(1+\sqrt{\tan \utheta} \cdot \sqrt{\tan \otheta}\right)\right)$-approximated by~$x$ (with respect to~$\leqq$).
		\end{enumerate}
	\end{proposition}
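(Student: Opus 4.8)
The plan is to assemble the proposition directly from the three preceding lemmas, since all the genuine work has already been done there; the only real content here is the bookkeeping of which ordering and which instance each approximation refers to, together with one short simplification of a maximum.

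First I would establish part~(i). The key observation is that the two objective functions of the weighted max-ordering scalarization of $I_\gamma^\utheta$ are precisely the two components of $T_\gamma^\utheta \circ f$, namely $\cos\otheta \cdot f_1 + \sin\otheta \cdot f_2$ and $\sin\utheta \cdot f_1 + \cos\utheta \cdot f_2$. Lemma~\ref{lem:right-weights} shows that, for the chosen weights $w_1,w_2$ and any $x'$ satisfying~\eqref{eq:q-definition}, we have $w_1 \cdot (\cos\otheta \cdot f_1(x') + \sin\otheta \cdot f_2(x')) = w_2 \cdot (\sin\utheta \cdot f_1(x') + \cos\utheta \cdot f_2(x'))$, which is exactly the balance condition required by Lemma~\ref{lem:max-ordering-approx} applied to the instance $I_\gamma^\utheta$. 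Since $x$ is $\alpha$-approximate for that weighted max-ordering scalarization, Lemma~\ref{lem:max-ordering-approx} yields that $x'$ is $\alpha$-approximated by $x$ in $I_\gamma^\utheta$ with respect to the componentwise order~$\leqq$. By the equivalence established after Definition~\ref{def:approximation} (approximation in $I$ with respect to $\leqq_\gamma^\utheta$ coincides with approximation in $I_\gamma^\utheta$ with respect to $\leqq$), this is the same as saying that $x'$ is $\alpha$-approximated by $x$ with respect to $\leqq_\gamma^\utheta$, which is the claim of part~(i).

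For part~(ii) I would feed the conclusion of part~(i) into Lemma~\ref{lem:general-approx}, which tells me that $x$ approximates $x'$ with respect to $\leqq$ with factor $\alpha \cdot (1 + \max\{\frac{f_1(x')}{f_2(x')} \cdot \tan\utheta,\; \frac{f_2(x')}{f_1(x')} \cdot \tan\otheta\})$. The remaining step is to evaluate this maximum under condition~\eqref{eq:q-definition}. Substituting $\frac{f_1(x')}{f_2(x')} = \frac{\sqrt{\tan\otheta}}{\sqrt{\tan\utheta}}$ (and its reciprocal) shows that both entries of the maximum equal $\sqrt{\tan\utheta} \cdot \sqrt{\tan\otheta}$, so the maximum collapses to a single term and the factor becomes $\alpha \cdot (1 + \sqrt{\tan\utheta} \cdot \sqrt{\tan\otheta})$, exactly as claimed. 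This is the geometric point illustrated in Figure~\ref{fig:conepush}: the ratio~\eqref{eq:q-definition} is precisely the one for which the horizontal and vertical stretch factors of the cone coincide, so neither component dominates.

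I expect no serious obstacle, as the three lemmas carry all the analytic weight. The only things to handle carefully are matching each approximation statement to the correct instance and ordering (in particular invoking the $I$-versus-$I_\gamma^\utheta$ equivalence at the right moment), and verifying cleanly that the substitution into the maximum of Lemma~\ref{lem:general-approx} genuinely makes both terms equal rather than merely bounding one by the other.
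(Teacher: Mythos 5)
Your proposal is correct and follows essentially the same route as the paper: part~(i) combines Lemma~\ref{lem:right-weights} (the balance condition under~\eqref{eq:q-definition}) with Lemma~\ref{lem:max-ordering-approx} applied to the weighted max-ordering scalarization of~$I_\gamma^\utheta$, and part~(ii) substitutes~\eqref{eq:q-definition} into the factor from Lemma~\ref{lem:general-approx} so that both terms of the maximum equal $\sqrt{\tan\utheta}\cdot\sqrt{\tan\otheta}$. Your only addition is making explicit the translation step between approximation in $I_\gamma^\utheta$ with respect to~$\leqq$ and approximation in $I$ with respect to~$\leqq_\gamma^\utheta$, which the paper leaves implicit in its ``immediately yields'' — a harmless, indeed clarifying, elaboration.
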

	\begin{proof}
		We first prove (i). Lemma~\ref{lem:right-weights} implies that 
		\begin{align*}
		w_1 \cdot \left(\cos \otheta \cdot f_1(x') + \sin \otheta \cdot f_2(x') \right) = 
		w_2 \cdot \left(\sin \utheta \cdot f_1(x')  + \cos \utheta \cdot f_2(x' )\right).
		\end{align*}
		Thus, we can apply Lemma~\ref{lem:max-ordering-approx} to the weighted max-ordering scalarization of~$I_\gamma^\utheta$ with weights $w_1,w_2$, which immediately yields that~$x'$ is $\alpha$-approximated by~$x$ with respect to~$\leqq_\gamma^\utheta$.
		
		In order to prove (ii), we apply Lemma~\ref{lem:general-approx} to obtain that $x'$ is approximated by $x$ with factor
		\begin{align*}
		\alpha \cdot \left(1+ \max\left\{\frac{f_1(x')}{f_2(x')} \cdot \tan \utheta, \frac{f_2(x')}{f_1(x')} \cdot \tan \otheta\right\}\right).
		\end{align*}
		Since~\eqref{eq:q-definition} holds, we know that
		\begin{align*}
		\max\left\{\frac{f_1(x')}{f_2(x')} \cdot \tan \utheta, \frac{f_2(x')}{f_1(x')}\cdot \tan \otheta\right\} &= \max\left\{\frac{\sqrt{\tan\otheta}}{\sqrt{\tan\utheta}} \cdot \tan \utheta, \frac{\sqrt{\tan\utheta}}{\sqrt{\tan\otheta}}\cdot \tan \otheta\right\}\\
		&= \sqrt{\tan\utheta} \cdot \sqrt{\tan\otheta},
		\end{align*}
		which yields~(ii). \qed
	\end{proof}
	Proposition~\ref{prop:Q-approx} states that, for given $\gamma \in (\frac \pi 2, \pi]$, any solution $x' \in X$ can be approximated by a solution that is $\alpha$-approximate for a specific weighted max-ordering scalarization of $I_\gamma^\utheta$, if $\utheta$ is chosen such that~\eqref{eq:q-definition} holds. The achievable approximation factor depends on $\gamma$ and $\utheta$. The following lemma provides an upper bound on this approximation factor that solely depends on $\gamma$. Its proof is given in Appendix~A.
	\begin{lemma}\label{lem:utheta-otheta-leq-mtheta}
		Let $\gamma \in [\frac \pi 2, \pi]$ and  $\utheta \in [0, \gamma- \frac \pi 2]\setminus \{\gamma - \pi, \frac \pi 2\}$. Then we have $\sqrt{\tan \utheta} \cdot \sqrt{\tan \otheta} \leq \tan \mtheta$,	\textcolor{black}{where $\mtheta = \frac \gamma 2 - \frac \pi 4$.}
	\end{lemma}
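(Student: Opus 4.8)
The plan is to reduce the statement to an elementary inequality comparing the geometric mean of two tangents with the tangent of their average angle. Throughout the admissible range one has $\utheta,\otheta\in[0,\tfrac\pi2)$: their sum $\utheta+\otheta=\gamma-\tfrac\pi2$ lies in $[0,\tfrac\pi2]$, and the excluded values $\{\gamma-\pi,\tfrac\pi2\}$ are precisely what keeps both tangents finite. Hence $\tan\utheta,\tan\otheta\ge 0$ and $\tan\mtheta\ge 0$, so $\sqrt{\tan\utheta}\cdot\sqrt{\tan\otheta}=\sqrt{\tan\utheta\cdot\tan\otheta}$, and it suffices to prove the squared inequality $\tan\utheta\cdot\tan\otheta\le\tan^2\mtheta$. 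The structural observation that makes everything work is that $\mtheta$ is exactly the arithmetic mean of $\utheta$ and $\otheta$, since $\utheta+\otheta=\gamma-\tfrac\pi2=2\mtheta$.

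Next I would rewrite both sides in terms of $\cos(\utheta-\otheta)$ and $\cos(\utheta+\otheta)$. The product-to-sum identities give
\[
\tan\utheta\cdot\tan\otheta=\frac{\sin\utheta\sin\otheta}{\cos\utheta\cos\otheta}=\frac{\cos(\utheta-\otheta)-\cos(\utheta+\otheta)}{\cos(\utheta-\otheta)+\cos(\utheta+\otheta)},
\]
while the half-angle identity $\tan^2\tfrac\theta2=\frac{1-\cos\theta}{1+\cos\theta}$ applied to $\theta=\utheta+\otheta=2\mtheta$ gives
\[
\tan^2\mtheta=\frac{1-\cos(\utheta+\otheta)}{1+\cos(\utheta+\otheta)}.
\]
Writing $u\coloneqq\cos(\utheta-\otheta)$ and $v\coloneqq\cos(\utheta+\otheta)$, the claim becomes $\frac{u-v}{u+v}\le\frac{1-v}{1+v}$.

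Finally I would clear denominators, after checking the relevant positivity. Since $0\le|\utheta-\otheta|\le\utheta+\otheta\le\tfrac\pi2$ and cosine is decreasing on $[0,\pi]$, one gets $0\le v\le u\le 1$ with $u>0$ (so $u+v>0$) and $1+v>0$; hence multiplying through by $(u+v)(1+v)$ preserves the inequality. Expanding $(u-v)(1+v)\le(1-v)(u+v)$ then collapses to $2v(u-1)\le 0$, which holds because $v\ge 0$ and $u\le 1$. I expect no genuine obstacle here: the algebraic core is immediate, and the only care needed is the bookkeeping of the angle ranges — in particular confirming $u+v>0$ so the first denominator does not vanish, and treating the boundary case $\gamma=\pi$, where $\utheta+\otheta=\tfrac\pi2$ forces $v=0$ and the inequality holds with equality (consistent with the known factor $2$ attained for supported solutions).
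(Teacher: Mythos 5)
Your proof is correct, and it takes a genuinely different route from the one in the paper. You exploit the fact that the sum $\utheta + \otheta = \gamma - \frac{\pi}{2} = 2\mtheta$ is fixed, so the lemma becomes the elementary statement that, among pairs of angles in $[0,\frac{\pi}{2})$ with fixed sum in $[0,\frac{\pi}{2}]$, the geometric mean of the tangents is maximized when the angles coincide; you establish this via product-to-sum identities and the half-angle formula, reducing everything to $\frac{u-v}{u+v} \leq \frac{1-v}{1+v}$ with $u = \cos(\utheta-\otheta)$, $v = \cos(\utheta+\otheta)$, which after clearing denominators collapses to $2v(u-1) \leq 0$. Your bookkeeping is accurate: the exclusions $\{\gamma-\pi,\frac{\pi}{2}\}$ do keep both tangents finite, $0 \leq v \leq u \leq 1$ with $u > 0$ guarantees the denominators are positive, and your identification of the equality case at $\gamma = \pi$ (where $v = 0$) is consistent with the factor~$2$ for supported solutions. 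The paper argues differently: it eliminates $\otheta$ via the tangent addition formula applied to $\otheta = \gamma - \frac{\pi}{2} - \utheta$, introduces the ratio parameter $s = \frac{1}{2}\bigl(\sqrt{\tan\utheta/\tan\otheta} + \sqrt{\tan\otheta/\tan\utheta}\bigr) \geq 1$, derives the closed-form identity $\sqrt{\tan\utheta}\cdot\sqrt{\tan\otheta} = \sqrt{1+s^2(\tan\gamma)^2} + s\cdot\tan\gamma$, and concludes by monotonicity in~$s$ (using $\tan\gamma \leq 0$), with $s=1$ corresponding to $\utheta = \otheta = \mtheta$. Your argument is more elementary and self-contained (it even avoids the paper's separate treatment of the trivial cases $\utheta = 0$ or $\otheta = 0$, and makes the equality characterization transparent), whereas the paper's computation has a side benefit beyond the lemma itself: the explicit formula for $\sqrt{\tan\utheta}\cdot\sqrt{\tan\otheta}$ in terms of $s$ and $\tan\gamma$ is exactly what allows the proof of Theorem~\ref{thm:cone-approx} to write down, in closed form, the angle $\utheta$ matching a prescribed ratio $q = \frac{f_1(x')}{f_2(x')}$, which your route does not directly supply.
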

	
	\noindent We are now ready to prove our main result.
	\begin{theorem}\label{thm:cone-approx}
		Let $I= (X,f)$ be a biobjective minimization problem instance and let $\gamma \in (\frac \pi 2, \pi]$. Let $X_Q \subseteq X$ be a set of solutions that, for any~$\utheta \in (0,\gamma -\frac \pi 2)$, contains an $\alpha$-approximate solution for the weighted max-ordering scalarization of~$I_\gamma^\utheta$ with weights $w_1 = \frac{\sqrt{\sin \utheta}}{\sqrt{\cos \otheta}} + \frac{\sqrt{\cos \utheta}}{\sqrt{\sin \otheta}}$ and $w_2 = \frac{\sqrt{\sin \otheta}}{\sqrt{\cos \utheta}} + \frac{\sqrt{\cos \otheta}}{\sqrt{\sin \utheta}}$. Then $X_Q$ is an $\left(\alpha \cdot (1+\tan \mtheta)\right)$-approximation (for~$I$), where $\mtheta = \frac \gamma 2 - \frac \pi 4$.
	\end{theorem}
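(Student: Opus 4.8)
The plan is to reduce the statement to Proposition~\ref{prop:Q-approx}(ii) together with Lemma~\ref{lem:utheta-otheta-leq-mtheta} by showing that \emph{every} feasible solution admits a matching parameter~$\utheta$ in the sense of~\eqref{eq:q-definition}. So first I would fix an arbitrary~$x' \in X$ and, since the standing assumption guarantees $f_1(x'), f_2(x') > 0$, consider the positive ratio $r \coloneqq \frac{f_1(x')}{f_2(x')}$. The aim of the first step is to produce some $\utheta \in (0, \gamma - \frac \pi 2)$ with $\frac{\sqrt{\tan \otheta}}{\sqrt{\tan \utheta}} = r$, i.e., satisfying~\eqref{eq:q-definition} for this particular~$x'$.

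For the existence of such a~$\utheta$, I would study the function $g(\utheta) \coloneqq \frac{\sqrt{\tan \otheta}}{\sqrt{\tan \utheta}}$ on the open interval $(0, \gamma - \frac \pi 2)$, recalling $\otheta = \gamma - \frac \pi 2 - \utheta$. On this open interval both~$\utheta$ and~$\otheta$ lie in $(0, \frac \pi 2)$, so $\tan \utheta$ and $\tan \otheta$ are positive and finite and~$g$ is continuous. I would then examine the endpoint behavior: as $\utheta \to 0^+$ we have $\tan \utheta \to 0^+$ while $\tan \otheta$ tends to the positive value $\tan(\gamma - \frac \pi 2)$ (which is $+\infty$ in the boundary case $\gamma = \pi$), so $g(\utheta) \to +\infty$; symmetrically, as $\utheta \to (\gamma - \frac \pi 2)^-$ we have $\otheta \to 0^+$ and hence $g(\utheta) \to 0^+$. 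By the intermediate value theorem, $g$ attains every positive value on $(0, \gamma - \frac \pi 2)$, so in particular there exists a~$\utheta$ with $g(\utheta) = r$. This existence argument is the one place where a little care is needed (and the only point at which the openness of the parameter interval and the positivity of the objective values are used), so I regard it as the main, if modest, obstacle of the proof.

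Given this~$\utheta$, I would invoke the defining property of~$X_Q$: by construction it contains some $x \in X$ that is $\alpha$-approximate for the weighted max-ordering scalarization of $I_\gamma^\utheta$ with exactly the weights $w_1, w_2$ prescribed in the statement. Since~$x'$ satisfies~\eqref{eq:q-definition} for this very~$\utheta$, Proposition~\ref{prop:Q-approx}(ii) applies and yields that $x'$ is $\left(\alpha \cdot (1 + \sqrt{\tan \utheta} \cdot \sqrt{\tan \otheta})\right)$-approximated by~$x$ with respect to~$\leqq$. Finally, Lemma~\ref{lem:utheta-otheta-leq-mtheta} gives $\sqrt{\tan \utheta} \cdot \sqrt{\tan \otheta} \leq \tan \mtheta$, so the factor is bounded by $\alpha \cdot (1 + \tan \mtheta)$, independently of~$\utheta$. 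As $x' \in X$ was arbitrary and the approximating solution~$x$ lies in~$X_Q$, this shows that every feasible solution is $\left(\alpha \cdot (1 + \tan \mtheta)\right)$-approximated by a member of~$X_Q$, which is precisely the assertion that~$X_Q$ is an $\left(\alpha \cdot (1 + \tan \mtheta)\right)$-approximation for~$I$.
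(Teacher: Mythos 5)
Your proposal is correct and takes essentially the same approach as the paper: fix an arbitrary $x' \in X$, choose $\utheta \in (0,\gamma - \frac \pi 2)$ so that~\eqref{eq:q-definition} holds for $x'$, then apply Proposition~\ref{prop:Q-approx}(ii) with the $\alpha$-approximate solution guaranteed by the definition of~$X_Q$ and bound the factor via Lemma~\ref{lem:utheta-otheta-leq-mtheta}. The only (minor) difference is that you establish the existence of the matching~$\utheta$ by a continuity and intermediate-value argument for $g(\utheta) = \sqrt{\tan \otheta}/\sqrt{\tan \utheta}$ on the open interval, correctly treating the boundary case $\gamma = \pi$, whereas the paper writes down the explicit closed form $\utheta = \arctan\bigl(\frac 1 q \cdot (s \cdot \tan \gamma + \sqrt{1+s^2 \cdot (\tan\gamma)^2})\bigr)$; your variant is equally valid, and your orientation of the ratio matches~\eqref{eq:q-definition}, while the displayed choice in the paper's proof inverts it (an apparent typo there).
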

	\begin{proof}
		Let $x' \in X$ be any feasible solution. Choose $\utheta \in (0, \gamma- \frac \pi 2)$ such that $\frac{\sqrt{\tan \utheta}}{\sqrt{\tan \otheta}} = \frac{f_1(x')}{f_2(x')}$, i.e.,
		$\utheta = \arctan\left(\frac 1 q \cdot \left(s \cdot \tan \gamma + \sqrt{1+ s^2 \cdot (\tan \gamma)^2}\right)\right)$ for $q = \frac{f_1(x')}{f_2(x')}$ and $s = \frac 1 2 \cdot \left(q + \frac 1 q\right)$. Then $X_Q$ contains an $\alpha$-approximate solution for the weighted max-ordering scalarization of $I_\gamma^\utheta$ with weights $w_1 = \frac{\sqrt{\sin \utheta}}{\sqrt{\cos \otheta}} + \frac{\sqrt{\cos \utheta}}{\sqrt{\sin \otheta}}$ and $w_2 = \frac{\sqrt{\sin \otheta}}{\sqrt{\cos \utheta}} + \frac{\sqrt{\cos \otheta}}{\sqrt{\sin \utheta}}$. Proposition~\ref{prop:Q-approx} states that $x'$ is $\left(\alpha \cdot \left(1+\sqrt{\tan \utheta} \cdot \sqrt{\tan \otheta}\right)\right)$-approximated by~$x$. Thus, by Lemma~\ref{lem:utheta-otheta-leq-mtheta}, $x'$ is also $\left(\alpha \cdot(1+ \tan \mtheta)\right)$-approximated by~$x$.
		\qed
	\end{proof}
	
	Note that one can obtain Theorem~\ref{thm:ws-2-approx} by setting $\gamma = \pi$ and $\alpha = 1$ in Theorem~\ref{thm:cone-approx}. Thus, Theorem~\ref{thm:cone-approx} is indeed a generalization of Theorem~\ref{thm:ws-2-approx}.
	
	The following corollary collects several alternative formulas expressing the approximation factor $(\alpha \cdot (1+\tan \mtheta))$ obtained in Theorem~\ref{thm:cone-approx}. Its proof is given in Appendix~B.
	
	\begin{corollary}\label{cor:alternative-formulas}
		The set $X_Q$ from Theorem~\ref{thm:cone-approx} is an $\left(\alpha \cdot (1+ S)\right)$-approximation, where 
		\begin{align*}
		S = \tan\left(\frac{\gamma - \frac \pi 2}{2}\right) = \frac{1 -\sin \gamma}{-\cos \gamma} = \frac{- \cos \gamma}{1+\sin \gamma} = \tan \gamma + \sqrt{1+ (\tan \gamma)^2}.
		\end{align*}
	\end{corollary}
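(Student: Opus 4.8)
The plan is to reduce the statement to a purely trigonometric identity, since Theorem~\ref{thm:cone-approx} already establishes that $X_Q$ is an $(\alpha\cdot(1+\tan\mtheta))$-approximation with $\mtheta=\frac\gamma2-\frac\pi4$. Hence it suffices to verify that each of the four expressions given for $S$ equals $\tan\mtheta$. The first of these is immediate: since $\frac{\gamma-\frac\pi2}{2}=\frac\gamma2-\frac\pi4=\mtheta$, the expression $\tan\left(\frac{\gamma-\frac\pi2}{2}\right)$ is by definition $\tan\mtheta$, and no computation is required. The remaining work is to show that the other three closed forms all coincide with this quantity.

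For the second and third expressions I would substitute $\theta\coloneqq\gamma-\frac\pi2$ so that $\mtheta=\frac\theta2$, and apply the standard half-angle identities $\tan\frac\theta2=\frac{1-\cos\theta}{\sin\theta}=\frac{\sin\theta}{1+\cos\theta}$. The only extra ingredient is the pair of co-function shifts $\cos\left(\gamma-\frac\pi2\right)=\sin\gamma$ and $\sin\left(\gamma-\frac\pi2\right)=-\cos\gamma$; substituting these into the two half-angle forms directly yields $\frac{1-\sin\gamma}{-\cos\gamma}$ and $\frac{-\cos\gamma}{1+\sin\gamma}$, respectively.

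For the final expression I would rewrite $\sqrt{1+(\tan\gamma)^2}=\frac{1}{|\cos\gamma|}$ and use that $\cos\gamma\le 0$ on $(\frac\pi2,\pi]$, so that $|\cos\gamma|=-\cos\gamma$; then $\tan\gamma+\sqrt{1+(\tan\gamma)^2}=\frac{\sin\gamma}{\cos\gamma}-\frac1{\cos\gamma}=\frac{1-\sin\gamma}{-\cos\gamma}$, matching the second expression. The main (and essentially only) obstacle is this sign bookkeeping: one must be careful that the square root equals $-\cos\gamma$ rather than $+\cos\gamma$, which is precisely where the restriction $\gamma\in(\frac\pi2,\pi]$ enters. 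I would additionally check the endpoint $\gamma=\pi$ separately, where $\tan\gamma=0$ and $\cos\gamma=-1$, so that all four forms collapse to $S=1$ (recovering the factor $1+1=2$ of Theorem~\ref{thm:ws-2-approx}) and the stated chain of equalities remains valid up to the boundary.
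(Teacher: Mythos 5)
Your proposal is correct and follows essentially the same route as the paper's own proof: both reduce the corollary to Theorem~\ref{thm:cone-approx} and then verify the chain of equalities via the half-angle identities for $\tan$ applied to $\theta = \gamma - \frac{\pi}{2}$ (with the co-function shifts $\cos\left(\gamma - \frac{\pi}{2}\right) = \sin\gamma$, $\sin\left(\gamma - \frac{\pi}{2}\right) = -\cos\gamma$), handling the final expression through the sign-aware identity $\sqrt{1+(\tan\gamma)^2} = -\frac{1}{\cos\gamma}$ on $\left(\frac{\pi}{2},\pi\right]$. Your additional check of the endpoint $\gamma = \pi$ is a harmless extra not present in the paper.
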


	Theorem~\ref{thm:cone-approx} yields the following corollary. It provides the approximation factor achievable by the set of $\gamma$-supported solutions in a biobjective minimization problem instance for any inner angle $\gamma \in [\frac \pi 2, \pi]$. Of course, the set of $\frac \pi  2$-supported solutions, i.e., the efficient set, is a $1$-approximation and the set of ($\pi$-) supported solutions is a $2$-approximation. In between $\frac \pi 2$ and $\pi$, the approximation factor is continuous and strictly increasing in~$\gamma$. See Figure~\ref{fig:approx-graph} for an illustration. 
	
	\begin{corollary}\label{cor:gamma-supp-approx}
		For any biobjective minimization problem instance and any $\gamma \in [\frac \pi 2, \pi]$, the set of $\gamma$-supported solutions is a $(1+\tan \mtheta)$-approximation, where~$\mtheta = \frac \gamma 2 - \frac \pi 4$.
	\end{corollary}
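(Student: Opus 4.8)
The plan is to derive this corollary as the special case $\alpha = 1$ of Theorem~\ref{thm:cone-approx}, taking $X_Q$ to be the full set of $\gamma$-supported solutions. First I would dispose of the boundary case $\gamma = \frac \pi 2$ separately, since Theorem~\ref{thm:cone-approx} is only stated for $\gamma \in (\frac \pi 2, \pi]$: here a $\frac \pi 2$-supported solution is exactly an efficient solution, $\mtheta = \frac \gamma 2 - \frac \pi 4 = 0$, and $\tan \mtheta = 0$, so the assertion reduces to the already-noted fact that the efficient set is a $1$-approximation.

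For the main case $\gamma \in (\frac \pi 2, \pi]$, I would verify that the set of $\gamma$-supported solutions satisfies the hypothesis of Theorem~\ref{thm:cone-approx} with $\alpha = 1$. Fix any $\utheta \in (0, \gamma - \frac \pi 2)$; then $\otheta = \gamma - \frac \pi 2 - \utheta$ also lies in $(0, \gamma - \frac \pi 2) \subseteq (0, \frac \pi 2)$, so all four quantities $\sin \utheta, \cos \utheta, \sin \otheta, \cos \otheta$ are strictly positive and the weights $w_1 = \frac{\sqrt{\sin \utheta}}{\sqrt{\cos \otheta}} + \frac{\sqrt{\cos \utheta}}{\sqrt{\sin \otheta}}$ and $w_2 = \frac{\sqrt{\sin \otheta}}{\sqrt{\cos \utheta}} + \frac{\sqrt{\cos \otheta}}{\sqrt{\sin \utheta}}$ are well-defined and positive. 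The key step is the standard existence result recalled in Subsection~\ref{subsec:moo}: for any instance and any strictly positive weights, the weighted max-ordering scalarization admits an optimal solution that is, moreover, efficient. Applying this to the instance $I_\gamma^\utheta$ (which inherits positivity of the objectives and $\R^2_\geqq$-closedness by the footnote to Definition~\ref{def:instance}) produces a solution $x$ that is optimal for the weighted max-ordering scalarization of $I_\gamma^\utheta$ with weights $w_1, w_2$ and simultaneously efficient in $I_\gamma^\utheta$, i.e., optimal with respect to $\leqq_\gamma^\utheta$. By Definition~\ref{def:gamma-supp}, such an $x$ is $\gamma$-supported, hence lies in the set $X_Q$ of $\gamma$-supported solutions. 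Since a $1$-approximate solution of a single objective instance is precisely an optimal one, this shows that $X_Q$ contains a $1$-approximate solution for each of the relevant scalarizations.

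With this verification in place, Theorem~\ref{thm:cone-approx} applies directly with $\alpha = 1$ and yields that $X_Q$ is an $\left(1 \cdot (1 + \tan \mtheta)\right) = (1 + \tan \mtheta)$-approximation, which is exactly the claim. I do not expect a genuine obstacle here, as the substantive work is carried out in Theorem~\ref{thm:cone-approx}; the only point requiring care is to invoke the max-ordering existence result for the transformed instance $I_\gamma^\utheta$ rather than for $I$, and to note explicitly that the standing assumptions transfer to $I_\gamma^\utheta$ so that this invocation is legitimate.
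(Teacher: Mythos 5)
Your proposal is correct and follows essentially the same route as the paper's own proof: the case $\gamma = \frac \pi 2$ is handled by the efficiency of the efficient set, and for $\gamma \in (\frac \pi 2, \pi]$ the existence of a solution that is simultaneously optimal for the weighted max-ordering scalarization of $I_\gamma^\utheta$ and efficient in $I_\gamma^\utheta$ (hence $\gamma$-supported) is used to verify the hypothesis of Theorem~\ref{thm:cone-approx} with $\alpha = 1$. Your additional checks---that the weights are well-defined and positive and that the standing assumptions transfer to $I_\gamma^\utheta$ via the footnote to Definition~\ref{def:instance}---are points the paper leaves implicit, but they do not change the argument.
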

	\begin{proof}
		For $\gamma = \frac \pi 2$, the claim is obviously true as the set of efficient solutions is a $1$-approximation. For $\gamma \in (\frac \pi 2, \pi]$, we know that, for any $\utheta \in (0, \gamma - \frac \pi 2)$ and any weighted max-ordering scalarization of $I_\gamma^\utheta$, there exists a solution that is optimal for both the weighted max-ordering scalarization of $I_\gamma^\utheta$ and for $I_\gamma^\utheta$ itself, and is therefore also $\gamma$-supported. Thus, the set of $\gamma$-supported solutions contains an optimal solution for any weighted max-ordering scalarization of $I_\gamma^\utheta$ for any $\utheta \in (0, \gamma - \frac \pi 2)$. The claim follows from Theorem~\ref{thm:cone-approx} setting $\alpha = 1$.
		\qed	
	\end{proof}
	\begin{figure}
		\begin{center}
			\begin{tikzpicture}
			\begin{axis}[ 
			xlabel={inner angle $\gamma$},
			ylabel={approximation factor},
			xtick = {pi/2,pi},
			xticklabels = {$\frac\pi 2$,$\pi$},
			] 
			\addplot[domain = pi/2:pi] {1+tan((deg(x)-90)/2))};
			\addplot[domain = pi/2:pi, dashed] {deg(x)/90};  
			\end{axis}
			\end{tikzpicture}
			\caption{Approximation factor achieved by the set of $\gamma$-supported solutions due to Corollary~\ref{cor:gamma-supp-approx} (solid) and Corollary~\ref{cor:rule-of-thumb} (dashed).\label{fig:approx-graph}}
		\end{center}
	\end{figure}
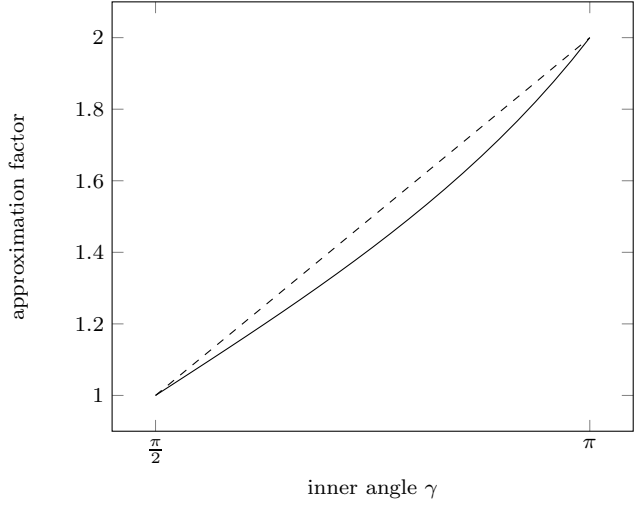
	
	Figure~\ref{fig:approx-graph} shows that the increase of the approximation factor achieved by the set of $\gamma$-supported solutions is quite close to linear in $\gamma$. In fact, it is slightly convex. Thus, a reasonable rule of thumb is that the percentage at which the angle $\gamma$ is between $\frac \pi 2$ and $\pi$ is the approximation accuracy that is lost by the set of $\gamma$-supported solutions compared to the efficient set. The next corollary formalizes this rule of thumb.
	
	\begin{corollary}\label{cor:rule-of-thumb}
		For any biobjective minimization problem instance and any $\gamma \in [\frac \pi 2, \pi]$, the set of $\gamma$-supported solutions is a $\frac{2\gamma}{\pi}$-approximation.
	\end{corollary}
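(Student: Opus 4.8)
The plan is to deduce this directly from Corollary~\ref{cor:gamma-supp-approx}, which already establishes that the set of $\gamma$-supported solutions is a $(1 + \tan \mtheta)$-approximation, where $\mtheta = \frac{\gamma}{2} - \frac{\pi}{4}$. Since any $\alpha$-approximation is automatically also a $\beta$-approximation whenever $\alpha \leq \beta$ (because all objective values are positive, so $f(x) \leqq \alpha \cdot f(x') \leqq \beta \cdot f(x')$), it suffices to prove the scalar inequality
\begin{align*}
1 + \tan\mtheta \leq \frac{2\gamma}{\pi} \qquad \text{for all } \gamma \in \left[\tfrac{\pi}{2}, \pi\right].
\end{align*}

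First I would record that both sides agree at the two endpoints. At $\gamma = \frac{\pi}{2}$ we have $\mtheta = 0$, so the left-hand side equals $1 = \frac{2}{\pi}\cdot\frac{\pi}{2}$, and at $\gamma = \pi$ we have $\mtheta = \frac{\pi}{4}$, so the left-hand side equals $1 + \tan\frac{\pi}{4} = 2 = \frac{2}{\pi}\cdot\pi$. Hence the linear function $\ell(\gamma) \coloneqq \frac{2\gamma}{\pi}$ is precisely the chord of $g(\gamma) \coloneqq 1 + \tan\mtheta = 1 + \tan\!\left(\frac{\gamma}{2} - \frac{\pi}{4}\right)$ over the interval $[\frac{\pi}{2}, \pi]$.

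The key step is then to show that $g$ is convex on $[\frac{\pi}{2}, \pi]$, because a convex function never exceeds the chord joining the values at its endpoints, which immediately yields $g(\gamma) \leq \ell(\gamma)$ and thus the desired inequality. Writing $u = \frac{\gamma}{2} - \frac{\pi}{4} \in [0, \frac{\pi}{4}]$, a direct differentiation gives
\begin{align*}
g'(\gamma) = \tfrac{1}{2}\sec^2 u, \qquad g''(\gamma) = \tfrac{1}{2}\sec^2 u \cdot \tan u,
\end{align*}
and $g''(\gamma) \geq 0$ on the interval since $\tan u \geq 0$ and $\sec^2 u > 0$ there; this establishes convexity. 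This matches the observation (illustrated in Figure~\ref{fig:approx-graph}) that the approximation factor of Corollary~\ref{cor:gamma-supp-approx} is slightly convex and lies below the linear interpolation between its endpoint values.

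The only part requiring any care is the convexity argument, and even that reduces to checking the sign of a single second derivative on an explicit interval. Everything else is an endpoint evaluation together with the standard chord property of convex functions, so I do not anticipate a genuine obstacle.
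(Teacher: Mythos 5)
Your proof is correct and follows essentially the same route as the paper: the paper's proof also reduces to the scalar inequality $1 + \tan \mtheta \leq \frac{2\gamma}{\pi}$ and obtains it from convexity of $\tan$ on $[0,\frac{\pi}{4}]$ via the chord bound $\tan \mtheta \leq \frac{4 \mtheta}{\pi}$, which is exactly your chord argument for $g$ after the affine substitution $\mtheta = \frac{\gamma}{2} - \frac{\pi}{4}$. The only cosmetic difference is that you verify convexity by an explicit second-derivative computation, whereas the paper invokes the well-known convexity of $\tan$ on $[0,\frac{\pi}{4}]$ directly.
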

	\begin{proof}
		Note that, since $\tan$ is a convex function on $[0,\frac \pi 4]$, where $\tan 0 = 0$ and $\tan \frac \pi 4 = 1$, we have
		\begin{equation*}
		\tan \mtheta = \tan\left(\frac {4 \cdot \mtheta}{\pi} \cdot \frac \pi 4 \right)\leq \frac {4 \cdot \mtheta}{\pi} \cdot \tan \frac \pi 4
		= \frac {4 \cdot \mtheta}{\pi}.
		\end{equation*}
		Thus, by Corollary~\ref{cor:gamma-supp-approx}, the set of $\gamma$-supported solutions is a $\left(1 +\frac {4 \cdot \mtheta}{\pi}\right)$-approximation, where	
		\textcolor{black}{$1 +\frac {4 \cdot \mtheta}{\pi} = 1 + \frac{2 \gamma - \pi}{\pi} = \frac{2\gamma}{\pi}$}.
		\qed	
	\end{proof}
	The following proposition states that Theorem~\ref{thm:cone-approx} and Corollary~\ref{cor:gamma-supp-approx} are tight in the sense that, for any inner angle $\gamma$ (including the cases $\gamma = \frac \pi 2$ and $\gamma = \pi$), no better approximation guarantee than the one provided is achievable by approximations with respect to~$\leqq_\gamma^\utheta$ for all~$\utheta \in [0,\gamma-\frac \pi 2]\setminus \{\gamma - \pi, \frac \pi 2\}$.
	\begin{proposition}\label{prop:tightness}
		For any $\gamma \in [\frac \pi 2,\pi]$, any $\alpha \geq 1$, and any $\varepsilon > 0$, there exists an instance~$I = (X,f)$ of a biobjective optimization problem for which a set that is an $\alpha$-approximation with respect to $\leqq_\gamma^\utheta$ for all~$\utheta \in [0,\gamma-\frac \pi 2]\setminus \{\gamma - \pi, \frac \pi 2\}$ is not an \textcolor{black}{$\left(\alpha \cdot (1+\tan \mtheta) - \varepsilon\right)$-approximation} with respect to $\leqq$.
	\end{proposition}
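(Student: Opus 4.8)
The plan is to exhibit, for each triple $(\gamma,\alpha,\varepsilon)$, a small explicit minimization instance together with a set $X_\alpha\subseteq X$ that is an $\alpha$-approximation with respect to \emph{every} $\leqq_\gamma^\utheta$, yet approximates one designated solution only with a factor arbitrarily close to $\alpha\cdot(1+\tan\mtheta)$ with respect to $\leqq$. Guided by~\eqref{eq:q-definition} and Lemma~\ref{lem:utheta-otheta-leq-mtheta}, the hardest target is the one whose objective ratio forces the critical angle $\utheta=\otheta=\mtheta$, at which Lemma~\ref{lem:utheta-otheta-leq-mtheta} is tight; I would therefore place a target solution $x^*$ at $f(x^*)=(1,1)$. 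The key structural obstacle to keep in mind is that a \emph{single} approximator cannot do the job: the two boundary cones $C_\gamma^{0}$ and $C_\gamma^{\gamma-\frac\pi2}$ are bounded by the two coordinate half-axes, so $\bigcap_{\utheta} C_\gamma^\utheta=\R^2_\geqq$, which means any single solution that $\alpha$-approximates $x^*$ with respect to all $\leqq_\gamma^\utheta$ would already $\alpha$-approximate it with respect to $\leqq$. Hence the construction must use (at least) two solutions that cover $x^*$ under complementary ranges of $\utheta$ and meet exactly at the critical angle $\mtheta$.

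For the instance I would take $X=\{x^*,y,y'\}$ with $f(x^*)=(1,1)$ and the two symmetric corner points $f(y)=\bigl(\alpha(1+\tan\mtheta)-b\tan\mtheta,\ b\bigr)$ and $f(y')=\bigl(b,\ \alpha(1+\tan\mtheta)-b\tan\mtheta\bigr)$ for a small parameter $b>0$, and set $X_\alpha=\{y,y'\}$. These points arise by pushing an $\alpha$-approximator of $x^*$ for the critical cone $\leqq_\gamma^\mtheta$ as far as possible along the extreme ray of $\alpha f(x^*)-C_\gamma^\mtheta$ that increases $f_1$ while letting $f_2\to b$; algebraically this is the choice $\frac{f_1(y)-\alpha}{\alpha-f_2(y)}=\tan\mtheta$. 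Since $f(X)$ is finite, the closedness and positivity assumptions hold, and $x^*\notin X_\alpha$, which is essential.

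The heart of the proof is verifying coverage. Here $\alpha f(x^*)-f(y)=(\alpha-b)\cdot(-\tan\mtheta,\,1)$, so by Lemma~\ref{lem:order-transformation} the solution $y$ $\alpha$-approximates $x^*$ with respect to $\leqq_\gamma^\utheta$ if and only if $T_\gamma^\utheta(-\tan\mtheta,\,1)\geqq 0$. The two components of this vector are $\sin\otheta-\cos\otheta\tan\mtheta$ and $\cos\utheta-\sin\utheta\tan\mtheta$, which are nonnegative exactly when $\otheta\ge\mtheta$ and $\utheta\le\tfrac\pi2-\mtheta$; using $\otheta=2\mtheta-\utheta$ and $\mtheta\le\tfrac\pi4$, this simplifies to $\utheta\in[0,\mtheta]$. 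By the coordinate-swap symmetry $\utheta\leftrightarrow\otheta$, the point $y'$ covers exactly $\utheta\in[\mtheta,\gamma-\tfrac\pi2]$, so together $y$ and $y'$ cover every admissible $\utheta$, meeting precisely at $\mtheta$. Since each of $y,y'$ trivially $\alpha$-approximates itself (as $(\alpha-1)f(\cdot)\in\R^2_\geqq\subseteq C_\gamma^\utheta$), the set $X_\alpha$ is indeed an $\alpha$-approximation with respect to $\leqq_\gamma^\utheta$ for every admissible $\utheta$.

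To conclude, because $f(x^*)=(1,1)$, both $y$ and $y'$ approximate $x^*$ with respect to $\leqq$ only with factor $\max\{f_1(y),f_2(y)\}=\alpha(1+\tan\mtheta)-b\tan\mtheta$, so choosing $b$ with $0<b<\varepsilon/\tan\mtheta$ makes this strictly larger than $\alpha(1+\tan\mtheta)-\varepsilon$; as $X_\alpha$ contains no other solution, it is not an $\bigl(\alpha\cdot(1+\tan\mtheta)-\varepsilon\bigr)$-approximation with respect to $\leqq$. The degenerate case $\gamma=\tfrac\pi2$ (where $\mtheta=0$ and $\tan\mtheta=0$) is handled separately by the trivial instance $f(x^*)=(1,1)$, $f(y)=(\alpha,\alpha)$, $X_\alpha=\{y\}$, whose Pareto factor for $x^*$ is exactly $\alpha>\alpha-\varepsilon$. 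I expect the only genuine work to be the coverage computation showing that the two corner points partition the parameter interval $[0,\gamma-\tfrac\pi2]$ exactly at $\mtheta$; the remaining claims are routine bookkeeping.
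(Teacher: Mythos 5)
Your construction is exactly the paper's tightness instance under the reparameterization $b = \alpha\varepsilon'$ (so $f(y) = f(x_1)$, $f(y') = f(x_2)$, $f(x^*) = f(x_3)$), with the same two-point set meeting at the critical angle $\mtheta$ and the same case split into $\utheta \le \mtheta$ covered by one corner point and $\utheta \ge \mtheta$ by the other; your difference-vector check $\alpha f(x^*) - f(y) = (\alpha - b)\cdot(-\tan\mtheta, 1) \in C_\gamma^\utheta$ is a cleaner but equivalent form of the paper's componentwise inequality chains. The argument is correct, with the minor bookkeeping caveat that you should explicitly impose $b < \alpha$ (the paper's condition $\varepsilon' < 1$) so that $\alpha - b > 0$ in the coverage computation and $f_1(y) \ge f_2(y)$ in the final maximum.
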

	\begin{proof}
		Define $\varepsilon' > 0$ such that $\varepsilon' < \min\{\frac \varepsilon \alpha,1\}$.
		Consider the following instance~$I$, which is illustrated in Figure~\ref{fig:tightness}: Let the feasible set consist of exactly three solutions, $x_1,x_2,x_3$ such that
		\begin{align*}
		&f_1(x_1) = \alpha \cdot \left(1+ (1-\varepsilon') \cdot \tan \mtheta \right),&& \qquad f_2(x_1) = \alpha \cdot \varepsilon',  \\
		&f_1(x_2) = \alpha \cdot \varepsilon', && \qquad f_2(x_2) = \alpha \cdot \left(1+ (1-\varepsilon') \cdot \tan \mtheta \right),\\
		&f_1(x_3) = 1, && \qquad f_2(x_3) = 1.
		\end{align*}
		Then $\{x_1,x_2\}$ is an $\alpha$-approximation with respect to $\leqq_\gamma^\utheta$ for all $\utheta \in [0,\gamma-\frac \pi 2]\setminus \{\gamma - \pi, \frac \pi 2\}$:
		For $\utheta \leq \mtheta$, we have $\otheta \geq \mtheta$ and, therefore, $\tan \utheta \leq \tan \mtheta \leq \tan \otheta$. We can compute
		\begin{align*}
		\cos \otheta \cdot f_1(x_1)  + \sin \otheta \cdot f_2(x_1) &= \cos \otheta \cdot \alpha \cdot \left(1+ (1-\varepsilon') \cdot \tan \mtheta \right) + \sin \otheta \cdot \alpha \cdot \varepsilon'\\
		&\leq  \cos \otheta \cdot \alpha \cdot \left(1+ (1-\varepsilon') \cdot \tan \otheta \right) + \sin \otheta \cdot \alpha \cdot \varepsilon'\\
		&= \alpha \cdot \left(\cos \otheta + \sin \otheta\right)\\
		&=  \alpha \cdot \left(\cos \otheta \cdot f_1(x_3)  + \sin \otheta \cdot f_2(x_3) \right)
		\end{align*}
		and, since $\mtheta \leq \frac \pi 4$ and, therefore, $\tan \mtheta \leq \tan \frac \pi 4 = 1$,
		\begin{align*}
		\sin \utheta \cdot f_1(x_1) + \cos \utheta \cdot f_2(x_1) &= \sin \utheta \cdot \alpha \cdot \left(1+ (1-\varepsilon') \cdot \tan \mtheta \right) + \cos \utheta \cdot \alpha \cdot \varepsilon'\\	
		&\leq \sin \utheta \cdot \alpha \cdot \left(1+ (1-\varepsilon') \cdot \frac 1 {\tan \mtheta} \right) + \cos \utheta \cdot \alpha \cdot \varepsilon'\\
		&\leq \sin \utheta \cdot \alpha \cdot \left(1+ (1-\varepsilon') \cdot \frac 1 {\tan \utheta} \right) + \cos \utheta \cdot \alpha \cdot \varepsilon'\\	
		&= \alpha \cdot \left(\sin \utheta + \cos \utheta\right)\\
		&=  \alpha \cdot \left(\sin \utheta \cdot f_1(x_3)  + \cos \utheta \cdot f_2(x_3) \right).
		\end{align*}
		Thus, for $\utheta \leq \mtheta$, $x_3$ is $\alpha$-approximated by $x_1$ with respect to $\leqq_\gamma^\utheta$. Similarly, we can prove that, for $\utheta \geq \mtheta$, $x_3$ is $\alpha$-approximated by~$x_2$ with respect to~$\leqq_\gamma^\utheta$.
		
		However, $\{x_1,x_2\}$ is not an $\left(\alpha \cdot \left(1+\tan \mtheta\right) - \varepsilon\right)$-approximation (with respect to~$\leqq$): We have $\tan \mtheta \leq 1$ and, thus, 
		\begin{align*}
		\left(\alpha \cdot \left(1+\tan \mtheta\right) - \varepsilon\right) \cdot f_1(x_3) &< \alpha \cdot \left(1+ \tan \mtheta - \varepsilon'\right)\\
		& \leq \alpha \cdot \left(1+ \tan \mtheta - \varepsilon' \cdot \tan \mtheta \right)\\
		&= f_1(x_1).
		\end{align*}
		Similarly, we have
		\begin{align*}
		\left(\alpha \cdot (1+\frac{- \cos \gamma}{\sin \gamma + 1}) - \varepsilon\right) \cdot f_2(x_3) < f_2(x_2).
		\end{align*}
		Thus, $x_3$ is not $\left(\alpha \cdot \left(1+\frac{- \cos \gamma}{\sin \gamma + 1}\right) - \varepsilon\right)$-approximated.
		\qed
	\end{proof}
	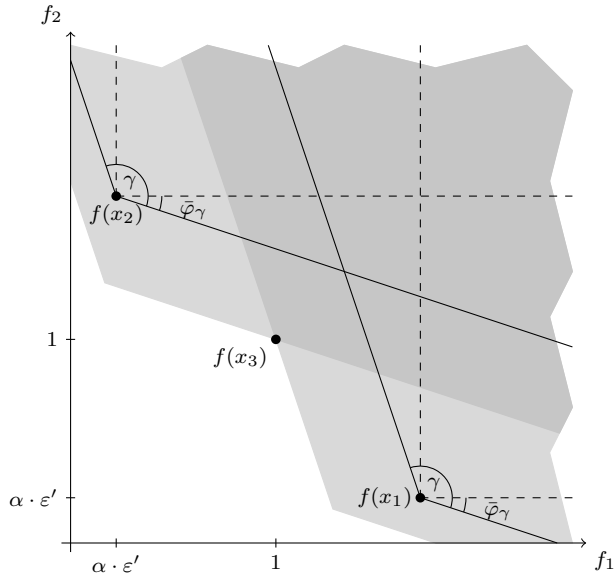
\begin{figure}[ht!]
		\begin{center}
			\begin{tikzpicture}[scale=0.6]

			\draw[fill,gray!30] (4.5,4.5) -- (23/3*3/4,3/4) -- (32/3*3/4,0)-- (32/3,0) -- (23/3,1) -- (6,6) --(1,23/3)-- (0,32/3)-- (0,32/3*3/4)--(3/4,23/3*3/4) -- (4.5,4.5);
			
			\draw[fill,gray!45] (4.5,4.5) -- (15/4,27/4) -- (6,6)-- (27/4,15/4) -- (4.5,4.5);	
			
			\draw[fill,gray!30] (32/3*3/4,0)-- (32/3,0) -- (23/3,1) -- (6,6) --(1,23/3)-- (0,32/3)-- (0,32/3*3/4) --(0,11) -- (2,10.5) --(3,11) -- (5,10.5)-- (6,11) -- (8,10.5) -- (10,11) -- (11,10) -- (10.5,8) -- (11,6) -- (10.5,5)--(11,3) --  (10.5,2) -- (11,0) -- (32/3*3/4,0);
			
			\draw[fill,gray!45] (6,6) -- (49/11,117/11) -- (3,11) -- (17/7,75/7)-- (15/4,27/4) -- (6,6);
			
			\draw[fill,gray!45] (6,6) -- (117/11,49/11) -- (11,3) -- (75/7,17/7)-- (27/4,15/4) -- (6,6);

			\draw[fill,gray!45] (6,6)-- (49/11,117/11) -- (5,10.5)-- (6,11) -- (8,10.5) -- (10,11) -- (11,10) -- (10.5,8) -- (11,6) -- (10.5,5)--(117/11,49/11)-- (6,6);

			\draw[->] (-0.2,0) -- (11.3,0) node[below right] {$f_1$};
			\draw[->] (0,-0.2) -- (0,11.3) node[above left] {$f_2$};

			\draw[-] (23/3,1) -- (32/3,0);
			\draw[-] (23/3,1) -- (13/3,11);

			\draw[-] (1,23/3) -- (0,32/3);
			\draw[-] (1,23/3) -- (11,13/3);
			
			
			\draw[dashed] (23/3,1) -- (23/3,11);
			\draw[dashed] (23/3,1) -- (11,1);
			
			\draw[dashed] (1,23/3) -- (1,11);
			\draw[dashed] (1,23/3) -- (11,23/3);
			
			\fill (23/3,1) node[left]{$f(x_1)$} circle (3pt);
			\fill (1,23/3) node[below]{$f(x_2)$} circle (3pt);
			\fill (4.5,4.5) node[below left]{$f(x_3)$} circle (3pt);
			
			\draw (2,23/3) arc (0:-18.43:1);
			\node[] at (2.7,7.35)  {$\mtheta$};
			
			\draw (23/3+1,1) arc (0:-18.43:1);
			\node[] at (23/3+1.7,0.7)  {$\mtheta$};
			
			\draw (23/3+0.7,1) arc (0:-18.43:0.7);
			\draw (23/3+0.7,1) arc (0:108.43:0.7);
			\node[] at (23/3+0.3,1+0.3)  {$\gamma$};
			
			\draw (1.7,23/3) arc (0:-18.43:0.7);
			\draw (1.7,23/3) arc (0:108.43:0.7);
			\node[] at (1+0.3,23/3+0.3)  {$\gamma$};
			
			\draw[-] (1,0.1) -- (1,-0.1) node[below] {$\alpha \cdot \varepsilon'$};
			\draw[-] (4.5,0.1) -- (4.5,-0.1) node[below] {$1$};
			
			\draw[-] (0.1,1) -- (-0.1,1) node[left] {$\alpha \cdot \varepsilon'$};
			\draw[-] (0.1,4.5) -- (-0.1,4.5) node[left] {$1$};
			\end{tikzpicture}
			\caption{Illustration of the instance~$I$ constructed in the proof of Proposition~\ref{prop:tightness}. The shaded region is $\alpha$-approximated by $x_1$ or $x_2$  with respect to~$\leqq_\gamma^\utheta$ for $\utheta = \mtheta$. It is easy to see that for $\utheta\leq \mtheta$ (i.e., if the dominance cones are rotated counterclockwise in the picture), $x_3$~is $\alpha$-approximated by~$x_1$ with respect to~$\leqq_\gamma^\utheta$, and, for $\utheta\geq \mtheta$ (i.e., if the dominance cones are rotated clockwise), $x_3$~is $\alpha$-approximated by~$x_2$ with respect to~$\leqq_\gamma^\utheta$. Thus, $\{x_1,x_2\}$ is an $\alpha$-approximation with respect to~$\leqq_\gamma^\utheta$ for any $\utheta \in [0,\gamma - \frac \pi 2]\setminus \{\gamma - \pi, \frac \pi 2\}$. \label{fig:tightness}}
		\end{center}
	\end{figure}

	\section{Structural Results for Maximization Problems}\label{sec:maximization}
	In this section, we investigate whether the results obtained in Section~\ref{sec:minimization} can be transfered to the case of maximization.
	It is known that obtaining approximations using the weighted sum scalarization is more challenging for maximization problems than for minimization problems since the set of supported solutions does not yield any approximation guaranteein general~\cite{Bazgan+etal.:power-weighted-sum}. We will see that this is also the case when using general ordering cones to obtain approximations. In contrast to the case of minimization problems, where the approximation guarantee that is achieved by the set of $\gamma$-supported solutions increases continuously when $\gamma$ is increased between~$\frac \pi 2$ and~$\pi$, the set of $\gamma$-supported solutions does not yield any approximation guarantee for any~$\gamma > \frac \pi 2$.
	
	In this section, instead of the assumption that the set $f(X) + \R^p_\geqq$ is closed, we assume that $f(X) - \R^p_\geqq$ is closed and that $f(X)$ is bounded. The additional assumption of~$f(X)$ being bounded ensures external stability, i.e., that, also for maximization problem instances, for any feasible solution~$x \in X$ that is dominated by another feasible solution~$x' \in X$, there also exists an efficient solution~$x'' \in X_E$ dominating~$x$. 
	\textcolor{black}{All other underlying concepts in this section are analogous to the corresponding concepts for minimization problems introduced in Section~\ref{sec:preliminaries}.}

	\textcolor{black}{Observation~\ref{obs:subset-approx} transfers} directly to the case of maximization. However, results similar to Section~\ref{sec:minimization} do not hold for maximization. The set of $\gamma$-supported solutions does \emph{not} yield any approximation guarantee in general:
	
	\begin{theorem}\label{thm:maximization}
		For any~$\gamma \in (\frac \pi 2,\pi]$ and any~$\alpha \geq 1$, there exists an instance~$I$ of a biobjective maximization problem where the set of $\gamma$-supported solutions is not an $\alpha$-approximation.
	\end{theorem}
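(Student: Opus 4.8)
The plan is to prove the theorem by exhibiting, for a given $\gamma \in (\frac\pi2,\pi]$ and $\alpha \ge 1$, a small explicit instance whose ``middle'' efficient solution can be neither $\alpha$-approximated by the two ``extreme'' solutions nor realized as $\gamma$-supported for \emph{any} admissible rotation. Concretely, I would take $X = \{x_1,x_2,x_3\}$ with $f(x_1) = (1,N)$, $f(x_2) = (N,1)$, and $f(x_3) = (c,c)$, where $c \coloneqq \alpha + 1$ and $N$ is a large constant fixed later in the argument. Since $f(X)$ is finite, it is bounded and $f(X) - \R^2_\geqq$ is closed, and all objective values are positive, so the instance meets the standing assumptions of this section.

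First I would record the two easy facts. All three solutions are efficient in the maximization sense: each extreme point strictly beats $f(x_3)$ in one coordinate but loses in the other, and $c > 1$ prevents either extreme point from dominating $x_3$, so none of the three points dominates another. Moreover, $x_3$ is \emph{not} $\alpha$-approximated (with respect to $\leqq$) by either extreme point, since $\alpha$-approximation of $x_3$ by $x_1$ would require $\alpha \cdot f_1(x_1) = \alpha \ge c = \alpha + 1$, which is false, and symmetrically for $x_2$. Hence, regardless of which of $x_1,x_2$ belong to the set of $\gamma$-supported solutions, that set fails to $\alpha$-approximate $x_3$ \emph{provided} $x_3$ itself is excluded from it.

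The heart of the proof, and the step I expect to require the most care, is therefore to show that $x_3$ is not $\gamma$-supported, i.e., that for \emph{every} $\utheta \in [0,\gamma-\frac\pi2]\setminus\{\gamma-\pi,\frac\pi2\}$ some other solution dominates $x_3$ with respect to $\leqq_\gamma^\utheta$ (in the maximization sense, $f(x')-f(x_3)\in C_\gamma^\utheta\setminus\{0\}$). Geometrically, $f(x_1)-f(x_3)=(1-c,\,N-c)$ points up-and-to-the-left and makes angle $\beta \coloneqq \arctan\frac{c-1}{N-c}$ with the positive $f_2$-axis, while $f(x_2)-f(x_3)=(N-c,\,1-c)$ points down-and-to-the-right and makes the same angle $\beta$ with the positive $f_1$-axis. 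Since the extreme rays of $C_\gamma^\utheta$ make angles $\otheta$ and $\utheta$ with the $f_2$- and $f_1$-axes, respectively, $x_1$ dominates $x_3$ exactly when $\beta \le \otheta = \gamma - \frac\pi2 - \utheta$, and $x_2$ dominates $x_3$ exactly when $\beta \le \utheta$. The set of rotations covered by neither is the open interval $\bigl(\gamma-\tfrac\pi2-\beta,\ \beta\bigr)$, which is empty precisely when $\beta < \frac{\gamma-\frac\pi2}{2} = \mtheta$. Thus it suffices to choose $N$ large enough that $\beta = \arctan\frac{c-1}{N-c} < \mtheta$, i.e., $N > c + \frac{c-1}{\tan\mtheta}$; this is possible because $\mtheta = \frac\gamma2 - \frac\pi4 > 0$ for $\gamma > \frac\pi2$ and $\beta \to 0$ as $N \to \infty$. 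For such $N$, every admissible $\utheta$ is covered — in fact with $x_1$ or $x_2$ lying strictly inside the cone $f(x_3)+C_\gamma^\utheta$ — so $x_3$ is dominated for every rotation and hence not $\gamma$-supported.

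Putting the pieces together, the set of $\gamma$-supported solutions is contained in $\{x_1,x_2\}$ (and in fact equals it, as $x_1$ and $x_2$ are optimal with respect to $\leqq_\gamma^{0}$ and $\leqq_\gamma^{\gamma-\pi/2}$, respectively), and neither of these $\alpha$-approximates $x_3$; therefore this set is not an $\alpha$-approximation. The delicate point distinguishing this from the half-space case $\gamma=\pi$ known in the literature is exactly the third step: because the $\gamma$-supported set is strictly larger than the supported set for $\gamma<\pi$, one must rule out optimality of $x_3$ for the \emph{whole} one-parameter family of intermediate cones rather than only for weighted sums, and the uniform angular bound $\beta < \mtheta$ (together with working with the closed cone and checking the endpoints $\utheta\in\{0,\gamma-\frac\pi2\}$) is what makes this simultaneous exclusion go through.
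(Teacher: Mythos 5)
Your proposal is correct and takes essentially the same approach as the paper: the paper's proof uses the identical three-point construction with $f(x_3) = (\alpha+1,\alpha+1)$ and the concrete choice $N = \alpha + 2 + \frac{\alpha}{\tan\mtheta}$ (which satisfies your condition $N > c + \frac{c-1}{\tan\mtheta}$), and likewise splits at $\utheta = \mtheta$, showing $x_1$ dominates $x_3$ with respect to $\leqq_\gamma^\utheta$ for $\utheta \leq \mtheta$ and $x_2$ for $\utheta \geq \mtheta$, with the only difference being that the paper verifies domination by explicit inequalities on the transformed objectives (using Lemma~\ref{lem:utheta-otheta-leq-mtheta}) rather than your equivalent angle comparison $\beta < \mtheta$. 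One negligible caveat: your parenthetical claim that $x_1,x_2$ are optimal with respect to $\leqq_\gamma^{0}$ and $\leqq_\gamma^{\gamma - \frac \pi 2}$ fails for $\gamma = \pi$ (where $\utheta \in \{0,\frac\pi2\}$ is inadmissible), but your argument only needs the $\gamma$-supported set to be contained in $\{x_1,x_2\}$, so this does not affect correctness.
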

	\begin{proof}
		For $\gamma \in (\frac \pi 2, \pi]$ and $\alpha\geq 1$, define the following instance of a biobjective maximization problem (see also Figure~\ref{fig:maximization-example}):
		Let the feasible set consist of exactly three solutions, $x_1,x_2,x_3$ such that $f_1(x_1) = 1$, $f_2(x_1) =  \alpha + 2 + \frac 1 {\tan \mtheta} \cdot \alpha$, $f_1(x_2) = \alpha+2 + \frac 1 {\tan \mtheta} \cdot \alpha$, $f_2(x_2) = 1$, $f_1(x_3) = \alpha + 1$, and $f_2(x_3) = \alpha + 1$.
		Then $x_3$ is not $\gamma$-supported: If $\utheta \leq \mtheta$, we have $\tan \mtheta \leq \tan \otheta$ and, therefore,
		\begin{align*}
		\cos \otheta \cdot f_1(x_1) + \sin \otheta \cdot f_2(x_1) &= \cos \otheta + \sin \otheta \cdot \alpha + 2 \cdot \sin \otheta + \frac{\sin \otheta}{\tan \mtheta}\cdot \alpha\\
		&> \cos \otheta + \sin \otheta \cdot \alpha + \sin \otheta + \frac{\sin \otheta}{\tan \mtheta}\cdot \alpha\\
		& \geq \cos \otheta + \sin \otheta \cdot \alpha + \sin \otheta + \cos \otheta \cdot \alpha\\
		&= \cos \otheta \cdot f_1(x_3) + \sin \otheta \cdot f_2(x_3).
		\end{align*}
		Moreover, we have~$\tan \utheta \leq \frac 1{\tan \otheta} \leq \frac 1 {\tan \mtheta}$ by Lemma~\ref{lem:utheta-otheta-leq-mtheta}, which implies that
		\begin{align*}
		\sin \utheta \cdot f_1(x_1) + \cos \utheta \cdot f_2(x_1) &= \sin \utheta + \cos \utheta \cdot \alpha + 2 \cdot \cos \utheta + \frac{\cos \utheta}{\tan \mtheta}\cdot \alpha\\
		&> \sin \utheta + \cos \utheta \cdot \alpha + \cos \utheta + \frac{\cos \utheta}{\tan \mtheta}\cdot \alpha\\
		& \geq \sin \utheta + \cos \utheta \cdot \alpha + \cos \utheta + \sin \utheta \cdot \alpha\\
		&= \sin \utheta \cdot f_1(x_3) + \cos \utheta \cdot f_2(x_3).
		\end{align*}
		Thus, $x_3$ is dominated by $x_1$ in $I_\gamma^\utheta$. Similarly, if $\utheta \geq \mtheta$, the solution~$x_3$ is dominated by $x_2$ in $I_\gamma^\utheta$.
		On the other hand, $\{x_1,x_2\}$ is obviously not an $\alpha$-approximation.
		\qed
	\end{proof}
	\begin{figure}[ht!]
		\begin{center}
			\begin{tikzpicture}[scale=0.8]
			
			
			
			\draw[->] (-0.2,0) -- (7.5,0) node[below right] {$f_1$};
			\draw[->] (0,-0.2) -- (0,7.5) node[above left] {$f_2$};
			
			
			
			\draw[-] (0.5,6.5) -- (3.75,0);
			\draw[-] (0.5,6.5) -- (0,6.75);
			
			\draw[-] (6.5,0.5) -- (0,3.75);
			\draw[-] (6.5,0.5) -- (6.75,0);
			
			
			

			\fill (0.5,6.5) node[above right]{$f(x_1)$} circle (3pt);
			\fill (6.5,0.5) node[above right]{$f(x_2)$} circle (3pt);
			\fill (2,2) node[below]{$f(x_3)$} circle (3pt);
			

			\draw[-] (0.50,0.1) -- (0.50,-0.1) node[below] {$1$};
			\draw[-] (2,0.1) -- (2,-0.1) node[below] {$\alpha + 1$};
			\draw[-] (0.1,0.5) -- (-0.1,0.5) node[left] {$1$};
			\draw[-] (0.1,2) -- (-0.1,2) node[left] {$\alpha + 1$};
			\end{tikzpicture}
			\caption{Illustration of the maximization problem instance~$I$ constructed in the proof of Theorem~\ref{thm:maximization}. The dominance cones of $x_1$ and $x_2$ with respect to $\geqq_\gamma^\utheta$ are illustrated for $\utheta = \mtheta$. It is easy to see that $x_3$ is dominated by $x_1$ for $\utheta \leq \mtheta$ (if the dominance cones are rotated counterclockwise) and by $x_2$ for $\utheta \geq \mtheta$ (if the dominance cones are rotated clockwise). Thus, $x_3$ is not $\gamma$-supported. However, $x_3$ is not $\alpha$-approximated by $x_1$ or by $x_2$ in $I$.\label{fig:maximization-example}}
		\end{center}
	\end{figure}
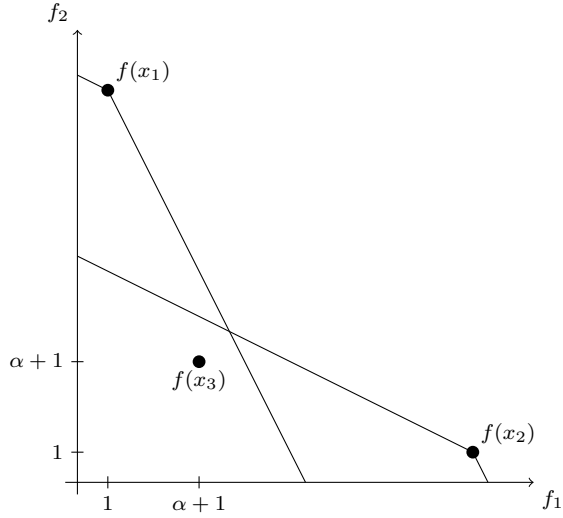
	
	\section{Conclusions and Additional Notes}
	This article studies approximation properties of general ordering cones containing the Pareto cone for biobjective minimization problems. As expected, it does not suffice to consider the set of optimal solutions (or an approximation) with respect to a single ordering cone in order to achieve an approximation guarantee in the classical sense. Instead, we classify ordering cones by their inner angle~$\gamma$ and consider sets that are optimal (or approximate) with respect to all closed convex ordering cones of inner angle~$\gamma$ simultaneously. These sets then, in fact, achieve an approximation guarantee, which depends on~$\gamma$. We introduce the concept of $\gamma$-supportedness to describe solutions that are optimal with respect to at least one ordering cone of inner angle~$\gamma$. Since this concept incorporates both efficiency and supportedness as special cases, our results are a generalization of the fact that the efficient set is a $1$-approximation and of known results about the approximation quality achievable by the set of supported solutions. Our results are best possible in the sense that better approximation guarantees than the ones shown are not generally achievable for any inner angle $\gamma \in [\frac \pi 2, \pi]$.
	
	\medskip
	
	Designing (polynomial-time) approximation algorithms based on general ordering cones (other than weighted sum scalarizations) is possible but presents further challenges since the resulting problems stay biobjective. Moreover, when attempting to compute, e.g., $\gamma$-supported solutions via the definition of $\gamma$-supportedness, all values for $\utheta$ from the continuous set $[0,\gamma -\frac \pi 2] \setminus \{\gamma - \pi, \frac\pi 2\}$ have to be considered. Finally, the fact that the matrix describing the linear mapping $T_\gamma^\utheta$ typically contains irrational entries constitutes an additional obstacle for algorithmic applications of the presented concepts.
	
	\medskip
	
	An interesting direction for future research is the generalization of the presented results to general ordering cones in more than two objectives. The equivalence between closed convex cones containing $\R_\geqq$ and closed vector preorders satisfying the Pareto axiom also holds for the more general case of three or more objectives. Also, most of the definitions and observations stated in Section~\ref{sec:preliminaries} easily transfer to the case of more than two objectives. For details, we refer to~\cite{Vanderpooten+etal:covers+approximations}. 
	
	Moreover, Proposition~\ref{prop:onecone} can easily be generalized to $p \geq 3$ objectives: For any closed vector preorder $\preceq$ on $\R^p$ satisfying the Pareto axiom (except for~$\leqq$) and any $\alpha \geq 1$, there exists a $p$-objective minimization problem instance where the set of optimal solutions with respect to $\preceq$ is not an $\alpha$-approximation with respect to~$\leqq$.
	
	However, since, in three or more dimensions, a general closed convex cone cannot be described by a finite number of scalar parameters, generalizing the positive results from Section~\ref{sec:minimization} is far from straightforward.
	One way to simplify the situation is the restriction to \emph{polyhedral cones}, which are cones that can be obtained from the nonnegative orthant via a linear mapping. Nevertheless, even then, it is not obvious how to generalize the concept of $\gamma$-supportedness, as there does not exist an unambiguous inner angle in a polyhedral cone in three or more dimensions.
	
	\section*{Declarations}
	The authors have no conflicts of interest to declare that are relevant to the content of this article. Data sharing not applicable to this article as no datasets were generated or analysed during the current study. 
	
	\begin{acknowledgements}
		This work was supported by the DFG grants RU~1524/6-1 and TH~1852/4-1.
	\end{acknowledgements}
	
	\bibliographystyle{spmpsci} 
	\bibliography{Literatur}   

\begin{thebibliography}{10}
\providecommand{\url}[1]{{#1}}
\providecommand{\urlprefix}{URL }
\expandafter\ifx\csname urlstyle\endcsname\relax
  \providecommand{\doi}[1]{DOI~\discretionary{}{}{}#1}\else
  \providecommand{\doi}{DOI~\discretionary{}{}{}\begingroup
  \urlstyle{rm}\Url}\fi

\bibitem{Ansari+Yao:vector-optimization}
Ansari, Q.H., Yao, J.C. (eds.): Recent Developments in Vector Optimization,
  \emph{Vector Optimization}, vol.~1.
\newblock Springer (2012)

\bibitem{Bazgan+etal:min-pareto}
Bazgan, C., Jamain, F., Vanderpooten, D.: Approximate {P}areto sets of minimal
  size for multi-objective optimization problems.
\newblock Operations Research Letters \textbf{43}(1), 1--6 (2015)

\bibitem{Bazgan+etal.:power-weighted-sum}
Bazgan, C., Ruzika, S., Thielen, C., Vanderpooten, D.: The power of the
  weighted sum scalarization for approximating multiobjective optimization
  problems (2019).
\newblock \url{http://arxiv.org/abs/1908.01181}

\bibitem{Daskalakis+etal:Chord-Algorithm}
Daskalakis, C., Diakonikolas, I., Yannakakis, M.: How good is the chord
  algorithm?
\newblock {SIAM} Journal on Computing \textbf{45}(3), 811--858 (2016)

\bibitem{Diakonikolas+Yannakakis:epsilon-convex}
Diakonikolas, I., Yannakakis, M.: Succinct approximate convex {P}areto curves.
\newblock In: Proceedings of the 19th Annual {ACM-SIAM} Symposium on Discrete
  Algorithms ({SODA}), pp. 74--83. SIAM (2008)

\bibitem{Diakonikolas+Yannakakis:approx-pareto-sets}
Diakonikolas, I., Yannakakis, M.: Small approximate {P}areto sets for
  biobjective shortest paths and other problems.
\newblock {SIAM} Journal on Computing \textbf{39}(4), 1340--1371 (2009)

\bibitem{Ehrgott:book}
Ehrgott, M.: Multicriteria Optimization.
\newblock Springer (2005)

\bibitem{Eichfelder+Jahn:solution-concepts}
Eichfelder, G., Jahn, J.: Vector optimization problems and their solution
  concepts.
\newblock In: Ansari and Yao  \cite{Ansari+Yao:vector-optimization}, chap.~1,
  pp. 1--27

\bibitem{Engau+Wiecek:cone-approximation}
Engau, A., Wiecek, M.M.: Cone characterizations of approximate solutions in
  real vector optimization.
\newblock Journal of Optimization Theory and Applications \textbf{134},
  499--513 (2007)

\bibitem{Glasser+etal:multi-hardness}
Gla{\ss}er, C., Reitwie{\ss}ner, C., Schmitz, H., Witek, M.: Hardness and
  approximability in multi-objective optimization.
\newblock Technical Report {TR10-031}, Electronic Colloquium on Computational
  Complexity (ECCC) (2010)

\bibitem{Halfmann+etal:general-approx}
Halffmann, P., Ruzika, S., Thielen, C., Willems, D.: A general approximation
  method for bicriteria minimization problems.
\newblock Theoretical Computer Science \textbf{695}, 1--15 (2017)

\bibitem{Herzel+etal.:one-exact}
Herzel, A., Bazgan, C., Ruzika, S., Thielen, C., Vanderpooten, D.: One-exact
  approximate pareto sets (2019).
\newblock \url{http://arxiv.org/abs/1908.10561}

\bibitem{Herzel+etal:survey}
Herzel, A., Ruzika, S., Thielen, C.: Approximation methods for multiobjective
  optimization problems: A survey.
\newblock {INFORMS} Journal on Computing (to appear)  (2020)

\bibitem{Hunt+etal:cones}
Hunt, B.J., Wiecek, M.M., Hughes, C.S.: Relative importance of criteria in
  multiobjective programming: A cone-based approach.
\newblock European Journal of Operational Research \textbf{207}(2), 936--945
  (2010)

\bibitem{jahn2009vector}
Jahn, J.: Vector optimization.
\newblock Springer (2009)

\bibitem{Koltun+Papadimitriou:approx-dom-repr}
Koltun, V., Papadimitriou, C.: Approximately dominating representatives.
\newblock Theoretical Computer Science \textbf{371}(3), 148--154 (2007)

\bibitem{Noghin:cones}
Noghin, V.D.: Relative importance of criteria: a quantitative approach.
\newblock Journal of Multi-Criteria Decision Analysis \textbf{6}, 355--363
  (1997)

\bibitem{Papadimitriou+Yannakakis:multicrit-approx}
Papadimitriou, C., Yannakakis, M.: On the approximability of trade-offs and
  optimal access of web sources.
\newblock In: Proceedings of the 41st Annual {IEEE} Symposium on the
  Foundations of Computer Science ({FOCS}), pp. 86--92 (2000)

\bibitem{Vanderpooten+etal:covers+approximations}
Vanderpooten, D., Weerasena, L., Wiecek, M.M.: Covers and approximations in
  multiobjective optimization.
\newblock Journal of Global Optimization \textbf{67}(3), 601--619 (2016)

\bibitem{Vassilvitskii+Yannakakis:trade-off-curves}
Vassilvitskii, S., Yannakakis, M.: Efficiently computing succinct trade-off
  curves.
\newblock Theoretical Computer Science \textbf{348}(2--3), 334--356 (2005)

\bibitem{Wiecek:cone-advances}
Wiecek, M.M.: Advances in cone-based preference modeling for decision making
  with multiple criteria.
\newblock Decision Making in Manufacturing and Services \textbf{1}(2), 153--173
  (2007)

\bibitem{Yu:cones}
Yu, P.L.: Cone convexity, cone extreme points, and nondominated solutions in
  decision problems with multiobjectives.
\newblock Journal of Optimization Theory and Applications \textbf{14}(3),
  319--377 (1974)

\bibitem{Yu:book}
Yu, P.L.: Multiple-Criteria Decision Making: Concepts, Techniques, and
  Extensions.
\newblock Springer (1985)

\end{thebibliography}
	
	\section*{Appendix A - Proof of Lemma~\ref{lem:utheta-otheta-leq-mtheta}}
	\begin{proof}
		If $\utheta = 0$ or $\otheta = 0$, the claim trivially holds, so we assume that $\gamma > \frac \pi 2$ and $\utheta \in (0, \gamma- \frac \pi 2)$. First, note that
		\begin{align}\label{eq:trigonometry}
		\tan \otheta = \tan (\gamma- \frac \pi 2 - \utheta) = - \frac{1}{\tan(\gamma - \utheta)} = - \frac{1+\tan\gamma \tan \utheta}{\tan \gamma - \tan \utheta},
		\end{align}
		where the second equality follows from the symmetry of the $\tan$-function and the last equality follows from the addition formula for $\tan$, which states that, for any $\theta_1,\theta_2 \in \R$ for which $\tan \theta_1$, $\tan \theta_2$, and $\tan(\theta_1-\theta_2)$ are well-defined,
		\begin{align*}	
		\tan(\theta_1 - \theta_2) &= \frac{\tan \theta_1 - \tan \theta_2}{1 + \tan \theta_1 \cdot \tan \theta_2}.
		\end{align*}
		Define $s \coloneqq \frac 1 2 \cdot \left(\frac{\sqrt{\tan \utheta}}{\sqrt{\tan \otheta}} + \frac{\sqrt{\tan \otheta}}{\sqrt{\tan \utheta}}\right)$. Then $s \geq 1$, where $s = 1$ if and only if $\utheta = \otheta = \mtheta$. Moreover, we can write
		\begin{align*}
		\tan \utheta + \tan \otheta &= \sqrt{\tan \utheta} \cdot \sqrt{\tan \otheta} \cdot \frac{\tan \utheta + \tan \otheta}{ \sqrt{\tan \utheta} \cdot \sqrt{\tan \otheta}}\\
		&= \sqrt{\tan \utheta} \cdot \sqrt{\tan \otheta} \cdot \left(\frac{\sqrt{\tan \utheta}}{\sqrt{\tan \otheta}} + \frac{\sqrt{\tan \otheta}}{\sqrt{\tan \utheta}}\right)\\
		&= 2s \cdot \sqrt{\tan \utheta} \cdot \sqrt{\tan \otheta}.
		\end{align*}
		Now, we reformulate \eqref{eq:trigonometry} to obtain
		\begin{align*}
		\tan \utheta \cdot \tan \otheta - \tan \utheta \cdot \tan \gamma - \tan \otheta \cdot \tan \gamma = 1.
		\end{align*}
		This yields
		\begin{align*}
		1+ s^2 \cdot (\tan \gamma)^2 &= \tan \utheta \cdot \tan \otheta - (\tan \utheta + \tan \otheta) \cdot \tan \gamma + s^2 \cdot (\tan \gamma)^2\\
		&= \tan \utheta \cdot \tan \otheta - 2s \cdot \tan \gamma \cdot \sqrt{\tan \utheta} \cdot \sqrt{\tan \otheta} + s^2 \cdot (\tan \gamma)^2\\
		&= \left(\sqrt{\tan \utheta} \cdot \sqrt{\tan \otheta} - s \cdot \tan \gamma\right)^2
		\end{align*}
		and, thus,
		\begin{align}\label{eq:right-hand-side}
		\sqrt{\tan \utheta} \cdot \sqrt{\tan \otheta} = \sqrt{1+ s^2 \cdot (\tan \gamma)^2} + s \cdot \tan \gamma.
		\end{align}
		By plugging the case that $\utheta = \otheta = \mtheta$ into~\eqref{eq:right-hand-side} and using that $s \geq 1$, we obtain
		\begin{align*}
		\tan \mtheta = \sqrt{\tan \mtheta} \cdot \sqrt{\tan \mtheta} &= \sqrt{1+ (\tan \gamma)^2} + \tan \gamma\\
		&\geq \sqrt{1+ s^2 \cdot (\tan \gamma)^2} + s \cdot \tan \gamma\\
		&= \sqrt{\tan \utheta} \cdot \sqrt{\tan \otheta},
		\end{align*}
		where the inequality holds since $\tan \gamma \leq 0$ and, therefore, the right hand side of~\eqref{eq:right-hand-side} is non-increasing in $s$. \qed
	\end{proof}
	
	\section*{Appendix B - Proof of Corollary~\ref{cor:alternative-formulas}}
	\begin{proof}
		\enlargethispage{\baselineskip}
		By Theorem~\ref{thm:cone-approx}, we know that $X_Q$ is an $(\alpha \cdot (1+\tan \mtheta))$-approximation, where $\tan \mtheta  = \tan \left(\frac \gamma 2 - \frac \pi 4\right) = \tan\left(\frac{\gamma - \frac \pi 2}{2}\right)$.
		The well-known half-angle formula for $\tan$ states that, for any angle $\theta \in [0,\pi)$,
		\begin{align*}
		\tan \frac \theta 2= \frac{\sin \theta}{1 + \cos \theta} = \frac{1- \cos  \theta}{\sin \theta}.
		\end{align*}
		Thus, on the one hand, we can write $S$ as
		\begin{align*}
		S = \tan\left(\frac{\gamma - \frac \pi 2}{2}\right) = \frac{\sin\left(\gamma - \frac \pi 2\right)}{\cos\left(\gamma - \frac \pi 2\right) + 1} = \frac{- \cos \gamma}{\sin \gamma + 1}
		\end{align*}
		and, on the other hand, we can write $S$ as
		\begin{align*}
		S =\tan\left(\frac{\gamma - \frac \pi 2}{2}\right) &= \frac{1-\cos\left(\gamma - \frac \pi 2\right)}{\sin\left(\gamma - \frac \pi 2\right)} = \frac{1 -\sin \gamma}{-\cos \gamma} = \frac{1}{-\cos \gamma} + \tan \gamma\\ &= \sqrt{1+(\tan \gamma)^2} + \tan \gamma,
		\end{align*}
		where the last equality follows from the well-known identity
		\begin{align*}
		\cos \theta &=  -\frac{1}{\sqrt{1+ (\tan \theta)^2}}
		\end{align*}
		for $\theta \in (\frac \pi 2, \pi]$.
		\qed
	\end{proof}
	
\end{document}